\newtheorem{theorem}{Theorem}[section]
\newtheorem{lemma}[theorem]{Lemma}
\newtheorem{proposition}[theorem]{Proposition}
\theoremstyle{remark}
\newtheorem{remark}[theorem]{Remark}
\theoremstyle{definition}
\newtheorem{example}[theorem]{Example}
\newtheorem{problem}[theorem]{Problem}
\DeclareMathOperator{\trace}{{\mathrm{trace}}}
\DeclareMathOperator{\image}{{\mathrm{Im}}}
\DeclareMathOperator{\soc}{{\mathrm{soc}}}
\DeclareMathOperator{\rad}{{\mathrm{rad}}}
\DeclareMathOperator{\Hom}{{\mathrm{Hom}}}
\newcommand{\abs}[1]{|#1|}	
\newcommand{\F}{{\mathbf {F}}} 
\newcommand{\Z}{{\mathbf {Z}}} 
\newcommand{\Q}{{\mathbf {Q}}}
\newcommand{\HH}{{\mathcal {H}}} 
\newcommand{\LL}{{\mathcal {L}}}
\newcommand{\Tr}{{\mathrm {Tr}}} 
\newcommand{\Abar}{{\overline{{A}}}} 
\newcommand{\leftsub}[2]{{\vphantom{#2}}_{#1}{#2}}
\DeclareMathOperator{\GL}{{\mathrm {GL}}} 	
\DeclareMathOperator{\diag}{{\mathrm {diag}}} 	
\newcommand{\allone}{{\mathbf 1}} 	
\begin{document}
\title[Smith Normal Form]{Smith Normal Forms of Incidence Matrices}
\author{Peter Sin}
\address{Department of Mathematics\\University of Florida\\ P. O. Box 118105\\ Gainesville
FL 32611\\ USA}
\date{}
\thanks{This work was partially supported by a grant from the Simons Foundation (\#204181 to Peter Sin)}



\begin{abstract}
A brief introduction is given to the topic of Smith normal forms of incidence matrices. 
A general discussion of techniques is illustrated by some classical examples. 
Some recent advances are described and the limits of our current understanding  are
indicated.
\end{abstract}

\maketitle

\section{Introduction} An incidence matrix is a matrix $A$ of zeros and ones
which encodes a relation between two finite sets $X$ and $Y$.
Related elements are said to be {\it incident}. The rows of the incidence matrix $A$ are indexed by the elements of $X$, ordered in some way, and the
columns are indexed by $Y$. The $(x,y)$ entry is $1$ if $x$ and $y$ are incident 
and zero otherwise. Many of the incidence relations we shall consider will
be special cases or variations of the following basic examples.
\begin{example}\label{exsets}
Let $S$ be a set of size $n$ and for two fixed numbers
$k$ and $j$ with  $0\leq k\leq j\leq n$ let $X$ be the set of all subsets of $S$ of size $k$ and $Y$ the set of subsets of $S$ of size $j$. The most obvious incidence relation
is set inclusion but, more generally, for each $t\leq k$
we have a natural incidence relation in which elements of $X$ and $Y$ to be incident if their intersection has size $t$.
\end{example}

\begin{example}\label{exsubspaces}
The  example above has a ``$q$-analog'' in which $X$ and $Y$ are
the sets of $k$-dimensional and  $j$-dimensional subspaces of an $n$-dimensional vector space over a finite field $\F_q$, with incidence being inclusion, or more generally,
specified by the dimensions of the subspace intersections. It is common in this context
to use the terminology of projective geometry, referring to $1$-dimensional
subspaces as points, $2$-dimensional subspaces as lines, etc. of the projective space
$PG(n-1,q)$.
\end{example}

Further examples of incidence relations abound from graph theory and design theory,
and we will discuss both general classes and specific examples. 

We may view $A$ as having entries over any commutative ring with $1$ but
in this paper we shall always assume that the entries are integers, which is
the most general case, in the sense that many results for other rings such as fields can 
be deduced from results over $\Z$. 

An incidence matrix translates an incidence relation, with no loss of information, into
linear algebra. Thus, we are led inescapably to the study of its algebraic invariants.
In the case where $X=Y$, we could consider the spectrum of the square matrix $A$, or its rational canonical form.
For the general case, where $A$ is not necessarily square, the fundamental invariant
is the {\it Smith normal form of $A$}, whose definition we now recall.
A square integer matrix is {\it unimodular} if it is invertible in the ring
of integer matrices, which is the same as saying that its determinant is $\pm 1$.
Two integer matrices $A$ and $B$ are {\it equivalent over $\Z$} if there
exist unimodular matrices $P$ and $Q$ such that $B=PAQ$. It is a standard theorem
that every integer matrix is equivalent to one of the form
\begin{equation}\label{PAQ}
PAQ=\begin{pmatrix}
 s_1&0&0&0&\hdots\\
 0&\ddots&0&0&\hdots\\
 0&0&s_r&0&\hdots\\
 0&0&0&0&\hdots\\
 \vdots&\vdots&\vdots&\vdots&\ddots
\end{pmatrix},
\end{equation}
where $r$ is the rank of $A$ and  $s_i|s_{i+1}$ for $i=1$,\dots,$r-1$.
The entries $s_i$ are uniquely determined up to signs and the matrix
is called the Smith normal form  (SNF) of $A$.
(The form is named after H. J. S. Smith, who also showed \cite{Smith2} that $s_i=d_i/d_{i-1}$, where $d_0=1$ and $d_i$ for $i\geq 1$ is the greatest common divisor of the ($i\times i$) minors of $A$). 

The SNF is a natural choice of invariant for an incidence relation
as it does not depend on an arbitrary ordering of the sets. In other words, it
is the same for all possible incidence matrices of the relation. 

Of course, it is well known how to bring an integer matrix to Smith normal form by
applying row and column operations corresponding to left and right multiplications
by unimodular matrices in a systematic way, based on the euclidean algorithm.
However, we are interested in solving this problem not for one incidence
matrix at a time but for parametrized families of such matrices. 
The object is to describe the SNFs (or equivalent forms) 
uniformly as functions of the parameters.
Such computations could be expected to provide insight into the 
mathematical structure of the incidence relations.

The purpose of this article is to provide an introduction to the topic
of computing the SNFs of families of incidence matrices.
Thanks to the existence of a readable and thorough survey \cite{Xiang2} 
on the SNF of incidence matrices of designs, and the related question of their $p$-ranks, 
much of the history and literature has been covered, and we can concentrate instead on elucidating some of the algebraic techniques that have been introduced, and on describing some recent results and open questions. 

\section{Generalities on Smith normal form} 
Let $R$ be a principal ideal domain and $A$ an $m\times n$  matrix with entries in $R$.
In our examples, $R$ will be either $\Z$ or the localization at a nonzero prime
of the ring of integers in a number field. The definition of SNF can,
{\it mutatis mutandis}, be extended to $R$.
If we view $A$ as the matrix of the homomorphism $\eta: R^m\to R^n$ by matrix multiplication on the right, then the SNF is one of several ways of describing the cyclic decomposition of the {\it Smith module} $G(\eta)=R^n/\image(\eta)$, namely in its {\it invariant factor form}
\footnote{Some minor differences in terminology are found in the literature.
Many authors do not allow $1$ as an invariant factor or
elementary divisor, which is quite reasonable from the module-theoretic viewpoint.
However, in studying  matrices or maps it is often more convenient to think of the invariant factors as the list of the $r$ nonzero entries $s_i$ of the SNF, including those
equal to 1. Then, for $1\leq i\leq r$, the exact power of a prime $\pi$ dividing $s_i$ is called the $i$-th $\pi$-elementary divisor. This is the convention we shall adopt.}.

While the term ``SNF'' is traditional and is a useful 
label, it is really the Smith module $G(\eta)$ which is at the center of interest.  
A matrix equivalent to $A$ which has nonzero entries only on the
diagonal is called a {\it diagonal form} of $A$. 
Since a diagonal form  describes 
$G(\eta)$ up to isomorphism, it  also counts as a valid solution 
to the ``SNF problem''. 
Another alternative description of $G(\eta)$ is the {\it elementary divisor form},
which can be thought of as the set of $\mathfrak p$-local SNFs for all primes $\mathfrak p$ dividing the order of torsion subgroup of $G(\eta)$.

Different problems lend themselves to different decompositions of $G(\eta)$, so it is best to be flexible about the exact formulation of results.
For instance, in the case of inclusion of $k$-subsets 
in $j$-subsets of an  $n$-set, the most natural diagonal form turns out to have 
binomial coefficients as entries, with multiplicities equal to differences of binomial
coefficients. In this case, to give elementary divisors
would involve consideration of the exact power to which each prime divides
a binomial coefficient, which, although it is well known, would make the statements
much more complicated. It would be even more  challenging to give a uniform description 
of the invariant factor form, but the point is that the extra difficulty comes from the arithmetic of binomial coefficients and not from the incidence relation.
In the case of inclusion of points in linear subspaces of a projective space, 
it will be seen that if $p$ is the underlying characteristic, the most natural decomposition of the torsion subgroup of $G(\eta)$ is as the direct sum of a $p$-group, described by a $p$-local SNF, and a cyclic group of order prime to $p$.

Suppose $\mathcal B$ is a basis of $R^m$ and $\mathcal C$ is a basis
of $R^n$ such that the matrix of $\eta$ in these bases is
diagonal. Then we shall call $\mathcal B$  a {\it left SNF basis}
and $\mathcal C$ a {\it right SNF basis.} In terms of a matrix equation
$$
PAQ=D,
$$
with $P$ and $Q$ unimodular  (i.e. invertible over $R$) and $D$ in diagonal form, 
$\mathcal B$ corresponds to the rows of $P$ and $\mathcal C$ to the rows of
$Q^{-1}$.

\subsection{Local SNFs} \label{localsnf}
On occasion, we shall be forced to consider certain extensions of the PID
$R$. For example we may wish to adjoin roots of unity to $\Z$. 
In general this may take us out of the realm of PIDs into Dedekind domains, 
but since the SNF problem can be
solved one prime at a time we can localize, bringing us back to PIDs,
in fact to discrete valuation rings.
Therefore, we shall consider an extension $R\subset R'$ of PIDs and
compare $G(\eta)$, the cokernel of $\eta:M\to N$ with the cokernel
$G(1\otimes\eta)$ of the induced map $1\otimes\eta: R'\otimes_RM\to R'\otimes_RN$.
Since tensoring $R$-modules with $R'$ is a right exact functor,
we have
\begin{equation}
G(1\otimes_R\eta)\cong R'\otimes_RG(\eta).
\end{equation}
In the simplest situation, where
the prime $p\in R$ is unramified in $R'$, and $\pi$ is a prime of $R'$
above $p$, the multiplicity of $p^i$ as an elementary divisor of $\eta$
is equal to the multiplicity of $\pi^i$ as an elementary divisor of $1\otimes_R\eta$.   

Let $R$ be a discrete valuation ring of characteristic zero 
with fraction field $K$, maximal ideal $(\pi)$ and residue
field $k=R/(\pi)$ of characteristic $p>0$.
Given a homomorphism 
\begin{equation}\label{eta}
\eta:M\to N
\end{equation}
of free $R$-modules of finite rank, we define
\begin{equation}
M_i=M_i(\eta)=\{ m\in M \mid \eta(m)\in \pi^iN\},
\end{equation}
and
\begin{equation}
N_i=N_i(\eta)=\{ \pi^{-i}\eta(m) \mid m\in M_i\}.
\end{equation}
We have
\begin{equation}
M=M_0\supseteq M_1\supseteq M_2\supseteq \cdots
\end{equation}
and
\begin{equation}
\image \eta= N_0\subseteq N_1\subseteq N_2\subseteq \cdots
\end{equation}
Let $\overline M_i$ and $\overline N_i$ be the images of $M_i$ and $N_i$
in $\overline M=M/\pi M$ and $\overline N=N/\pi N$ respectively.

There exist $\ell$, $\ell'$ such that $\overline M_i=\overline{\ker\eta}$ for  $i\geq\ell$
and $N_j$ is equal to the purification of $\image\eta$ for $j\geq\ell'$.

Let $e_i(\eta)$ denote the multiplicity of $\pi^i$ as an elementary divisor
of $\eta$. Then, as is easily seen from considering the SNF of $\eta$, we have
\begin{equation}\label{eldiv}
e_i(\eta)=\dim(\overline M_i/\overline M_{i+1})=\dim(\overline N_{i}/\overline N_{i-1}).
\end{equation}
(We set $N_{-1}=0$.)
Left and right SNF bases can be constructed from the modules $M_i$ and $N_j$ as follows.
Choose bases $\overline B_i$ for the $\overline M_i$ that are ``nested'',
by which is meant that $\overline B_\ell\subseteq\overline B_{\ell-1}\subseteq\cdots\subseteq \overline B_0$. First we let $\overline D_\ell=\overline B_\ell$ and for $i=0$,\dots,$\ell-1$  we
set $\overline D_i=\overline B_i\setminus \overline B_{i+1}$. 
Let $D_\ell$ be a basis of $\ker\eta$ which maps onto $\overline D_\ell$
and for $i=0$,\dots, $\ell-1$, let $D_i\subseteq M_i$ be a set which maps bijectively onto $\overline D_i$. 
Then it is easily verified that $B=\bigcup_{j=0}^\ell D_j$ is a left SNF basis
for $\eta$. Similarly, a right SNF basis can be constructed by lifting
nested bases of the $\overline N_j$.
  
As observed in \cite{BDS} these notions are of importance when we wish to compare the 
SNF of a product $AB$ of two matrices with those of $A$ and $B$.  
Of course, it is easy to see that $s_i(A)$ and $s_i(B)$
divide $s_i(AB)$ but little more can be said in general 
except in trivial cases such as when the Smith modules are finite groups with coprime
orders.
As a simple example, let 
\begin{equation}
A=\begin{pmatrix}1&0\\
-p&p^2
\end{pmatrix}
\qquad\text{and}\qquad
B=\begin{pmatrix}p&0\\
1&p
\end{pmatrix}.
\end{equation}
Then $A$ and $B$ each have elementary divisors $1$ and $p^2$, while $AB$
has elementary divisors $p$ and $p^3$. 

Suppose that $n$ is the number of columns of $A$ and the number of rows of $B$,
and suppose that $R^n$ has a basis which is {\it simultaneously} a right SNF basis
for $A$ and a left SNF basis for $B$.  Then we have unimodular matrices $P$, $Q$ and $S$
such that
$$
PAQ=D,\quad Q^{-1}BS=D',
$$
with $D$ and $D'$ in diagonal form.

Hence $P(AB)S=DD'$ and we have  multiplicativity of the diagonal forms.

The following result (based on an argument in \cite{BDS}) shows how the
rare phenomenon of  multiplicativity of diagonal forms 
may arise from  certain  group actions.

\begin{proposition}\label{multfree} Let $K$ be the fraction field of $R$.  Suppose that
we have an abelian group $G$ of order prime to $p$, $R$-free $RG$-modules $X$, 
$Y$ and $Z$, and $RG$-module homomorphisms  
$\alpha:X\to Y$ and $\beta:Y\to Z$. Assume further that the action of $KG$ on
$K\otimes_RY$ is multiplicity-free, i.e. no two simple composition factors
are isomorphic. Then  $Y$ has a basis which is simultaneously
a right SNF basis for $\alpha$ and a left SNF basis for $\beta$.
\end{proposition}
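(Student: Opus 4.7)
The plan is to use the central idempotents coming from the $KG$-isotypic decomposition of $K\otimes_R Y$, which under our hypotheses lift to $RG$, to reduce the statement to a rank-one situation that is then handled by the Chinese Remainder Theorem. First I would verify that the primitive central idempotents of $KG$ corresponding to the simple composition factors $V_i$ of $K\otimes Y$ already belong to $RG$: writing $\chi_i$ for the $K$-valued character of $V_i$, the Wedderburn idempotent is $e_i=|G|^{-1}\sum_{g\in G}\chi_i(g^{-1})g$, and its coefficients lie in $R$ because $|G|^{-1}\in R$ (as $\gcd(|G|,p)=1$) and the character values are sums of roots of unity lying in $K$, hence in $R$ by integral closedness of the DVR. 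This produces an $RG$-decomposition $Y=\bigoplus_i Y_i$ with $Y_i=e_iY$, and parallel splittings of $\alpha$ and $\beta$. Because every $RG$-submodule of $Y$ respects this decomposition, both filtrations $\{M_k(\beta)\}$ and $\{N_k(\alpha)\}$ split along the $Y_i$, and it suffices to produce, for each $i$, a single $R$-basis of $Y_i$ that simultaneously serves as a left SNF basis for $\beta_i$ and a right SNF basis for $\alpha_i$.

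For each fixed $i$, the fact that $G$ is abelian and $V_i$ appears with multiplicity one forces $V_i$ to be one-dimensional over the field $K_i=\End_{KG}(V_i)$, and the $RG$-action on $Y_i$ factors through a subring $R_i\subseteq K_i$. Since $|G|$ is a unit in $R$, the relevant cyclotomic polynomial is separable modulo $\pi$ with discriminant a unit, so the extension $R_i/R$ is unramified and $R_i$ is the full ring of integers of $K_i$; in particular, $\pi R_i$ factors as a product $\prod_{\mathfrak p}\mathfrak p$ of distinct primes. Thus $Y_i$ is a rank-one torsion-free module over the semilocal Dedekind (hence principal ideal) domain $R_i$, so $Y_i\cong R_i$; fix a generator $y_0$. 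A routine calculation with $\mathfrak p$-adic valuations then yields $M_k(\beta_i)=\prod_{\mathfrak p}\mathfrak p^{\max(k-w_{\mathfrak p},0)}\cdot y_0$ and $N_k(\alpha_i)=\prod_{\mathfrak p}\mathfrak p^{\max(u_{\mathfrak p}-k,0)}\cdot y_0$ for explicit nonnegative integers $w_{\mathfrak p}$, $u_{\mathfrak p}$ intrinsic to $\beta_i$ and $\alpha_i$.

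Finally I would construct the common basis by lifting mod $\pi$. The reduction $\overline{Y_i}\cong\prod_{\mathfrak p}k_{\mathfrak p}$ splits canonically, and both reduced filtrations are the evident sub-products of the $k_{\mathfrak p}$, so any $k$-basis of $\overline{Y_i}$ built by concatenating a $k$-basis of each $k_{\mathfrak p}$ is automatically compatible with both. A basis vector supported at $\mathfrak p$ sits at level $w_{\mathfrak p}$ for the left filtration and at level $u_{\mathfrak p}$ for the right filtration, so an integral lift $b\in R_i$ must have $v_{\mathfrak p}(b)=0$ with the prescribed residue at $\mathfrak p$ and must also satisfy $v_{\mathfrak q}(b)\geq\max(w_{\mathfrak p}-w_{\mathfrak q},\,u_{\mathfrak q}-u_{\mathfrak p},\,1)$ for every other prime $\mathfrak q$ above $\pi$; the Chinese Remainder Theorem delivers exactly such a $b$, and the union of all such lifts is the desired basis of $Y_i$. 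The main obstacle is precisely this simultaneous valuation condition: when $R_i$ happens to be a DVR (one prime above $\pi$), any $R$-basis of $R_i$ automatically works, so the essential content of the proposition is the CRT argument that reconciles the two filtrations in the semilocal multi-prime case.
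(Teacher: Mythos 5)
Your argument is correct in substance, but it takes a genuinely different route from the paper's. The paper first adjoins a primitive $\abs{G}$-th root of unity $\xi$ and replaces $R$ by (a localization of) $R[\xi]$, using unramifiedness to transport elementary divisors; over the enlarged ring $K\otimes_R Y$ splits into one-dimensional character spaces, $Y=\oplus_\chi R_\chi$ with each $R_\chi$ free of rank one over the DVR itself, and the canonical generators $v_\chi$ are shown directly to form the simultaneous basis: the key step is that if $x\in M_i$ reduces to $\overline v_\chi$ then $h_\chi x=uv_\chi$ with $u$ a unit, and $h_\chi x\in M_i$ because $M_i$ is an $RG$-submodule. You instead stay over $R$, decompose via the rational central idempotents, and are forced to analyze each component $Y_i$ as a rank-one module over the semilocal unramified extension $R_i=R[\zeta]$, reconciling the two filtrations prime-by-prime above $\pi$ with the Chinese Remainder Theorem. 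What the paper's route buys is brevity: no valuation bookkeeping and no CRT, since after base change every component is literally rank one over a DVR and the generator $v_\chi$ works on the nose. What your route buys is that it proves the literal statement --- an honest $R$-basis of $Y$ --- without the (unstated in the paper) descent step from $R[\xi]$ back to $R$; the paper's ``we may assume $R=R[\xi]$'' is really only justified at the level of elementary divisors, which is all that is needed for the application to $\beta\circ\alpha$. Two loose ends in your write-up, both easily repaired: the image $\alpha_i(X_i)$ is an ideal of $R_i$ times $y_0$ whose prime-to-$\pi$ part $J$ need not be trivial, so membership of the lift $b$ in $N_{u_{\mathfrak p}}$ requires the additional congruence $b\in J$ (the approximation theorem handles this alongside your other conditions); and the degenerate components with $\alpha_i=0$ or $\beta_i=0$ (where $w_{\mathfrak p}=\infty$, or where the $N_j$ vanish and the right SNF basis must be completed by an arbitrary basis of $Y_i$) should be treated separately, though they cause no difficulty.
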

\begin{proof} 
Let $\xi$ be  a primitive $\abs{G}$-th root of unity in an algebraic closure of $K$.
Then since $(\pi)$ is unramified in $R[\xi]$, the $\pi$-elementary divisors of 
the induced maps $1\otimes\alpha$, $1\otimes\beta$ and $1\otimes\beta\circ\alpha$
are the same as those for $\alpha$, $\beta$ and $\beta\circ\alpha$.
Also the multiplicity-free hypothesis is still valid if we extend the ring to $R[\xi]$.
Therefore, we may assume that $R=R[\xi]$. In this case, we have a decomposition
of $K\otimes_RY$ as a direct sum of one-dimensional components $K_\chi$,
where each $\chi$ is a character of $G$. The element
$$
h_\chi=\frac{1}{\abs{G}}\sum_{g\in G}\chi(g^{-1})g
$$
is a projection onto $K_\chi$ which lies in the center of $RG$, 
and $\sum_{\chi\in\Hom(G,R^\times)}h_\chi=1$.
Therefore, we also have the decomposition 
$Y=\oplus_\chi R_\chi$, where $R_\chi=Y\cap K_\chi$.
Let $v_\chi$ be a generator of $R_\chi$, so that the $v_\chi$ form a basis $\mathcal B$.
We claim that $\mathcal B$ is simultaneously a left SNF basis for $\beta$ and a right
SNF basis for $\alpha$.
The images $\overline v_\chi$ of the $v_\chi$ form a basis $\overline{\mathcal B}$ 
of $\overline Y$. The multiplicity-free condition means that for every submodule of $\overline Y$ there is a subset of $\overline{\mathcal B}$ which is a basis.
Thus, in the construction of a left SNF basis $B$ described in \S\ref{localsnf} (with $\eta=\beta$, $M=Y$ and  $N=Z$) we can take the nested bases $\overline B_i$ to 
be subsets of $\overline{\mathcal B}$.
Hence, if $x\in D_i\subset M_i$ then the image of $x$ in $\overline M$
is some $\overline v_\chi$. It follows that $h_\chi x=uv_\chi$, for some
unit $u\in R$, so in fact $v_\chi\in M_i$. If we replace $x$ by $v_\chi$,
in $B$, the resulting set is again a left SNF basis for $\beta$. This means that
the set of all the $v_\chi$ is a left SNF basis for $\beta$. An identical argument
about the construction of a right SNF basis from the $N_j$ shows that
the set of all $v_\chi$ is also a right SNF basis for $\alpha$.
\end{proof}

A  characterization of the SNF by some simple properties is given in \cite{Queiro}.
Other general properties of the Smith group and some applications are discussed in 
  \cite{rushanan2} and \cite{rushanan1}.

\section{The SNF problem for incidence matrices} 
In applying the general theory of the previous section to incidence problems 
we start with $M=\Z^X$ and $N=\Z^Y$ and
$\eta:M\to N$, sending $x\in X$ to the sum of all elements of $Y$
incident with $x$. If there is an action of a group $G$ on $X$ and $Y$ that 
preserves the incidence relation, then $M$ and $N$ are
permutation $\Z G$-modules and $\eta$ is a $\Z G$-module homomorphism. 
For example in Example~\ref{exsets}, the symmetric group $S_n$
acts, and in Example~\ref{exsubspaces} we may take $G=\GL(n,q)$.
The spaces $\Q^X$ and $\Q^Y$ have inner products for which $X$ and $Y$
are orthonormal bases, and then $M$ and $N$ are unimodular lattices.
If $\eta^*:N\to M$ is the transpose of $\eta$, sending $y\in Y$
to the sum of elements of $X$ incident with it, then we have
$$
\langle x,\eta^*(y)\rangle=\langle \eta(x),y\rangle.
$$
If we work over a discrete valuation ring $R$, then the
modules $M_i$ and $N_i$ of \S\ref{localsnf}  are $RG$-submodules
of $M$ and $N$. Further,  $\overline M$, $\overline M_i$, $\overline N$, $\overline N_i$
are $\overline RG$-modules. In some cases, one can find a very direct 
connection between the $RG$-submodule structure of these modules and the
SNF, using (\ref{eldiv}).

Many families of incidence relations are encompassed
by Examples~\ref{exsets} and \ref{exsubspaces}, 
and most of the SNF problems are unsolved. 
The complete solutions in some important cases and progress
in other cases will form a large part of our discussion.
A particular family may be an example of  a general combinatorial structure
such as a strongly regular graph, design or difference set, so
our next task is to  examine  properties of these general structures
that are relevant to the SNF problem.


\subsection{Graphs}
In the case where the incidence relation is between sets $X$ and $Y$
which are equal, or are in bijection is some prescribed way, the incidence
matrix $A$ can be regarded as the adjacency matrix of a directed graph, which is simple
if no element is related to itself. 

In addition to the SNF one can also consider the eigenvalues of $A$.
This applies to some subcases of Examples~\ref{exsets} and \ref{exsubspaces},
such as when $X=Y$ is the set of $k$-subsets of an $n$-set, with
disjointness as the incidence, which defines the {\it Kneser graph} $K(n,k)$,
or when $X=Y$ is the set of lines in $PG(3,q)$ with the relation of
skewness. The latter example, to which we shall return later,
also defines the noncollinearity graph of a hyperbolic quadric in $PG(5,q)$,
by the Klein correspondence. 
At the most general level, there is no close relationship between the eigenvalues and 
invariant factors of a square integral matrix, as illustrated by the  matrices 
\begin{equation}
A_1=\begin{pmatrix}2&0&0\\
0&2&0\\
0&0&2
\end{pmatrix}, \qquad
A_2=\begin{pmatrix}2&1&0\\
0&2&0\\
0&0&2
\end{pmatrix},\qquad
A_3=\begin{pmatrix}2&1&0\\
0&2&1\\
0&0&2
\end{pmatrix}
\end{equation}
whose invariant factors are respectively $(2,2,2)$, $(1,2,4)$ and $(1,1,8)$.

One general observation is the following \cite{BH}.
\begin{lemma}\label{eiginv}
Let $A$ be a square integral matrix with integral eigenvalue
$a$ of (geometric) multiplicity $m$. Then the number of invariant factors of $A$
divisible by $a$ is at least $m$.
\end{lemma}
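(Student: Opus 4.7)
The plan is to reduce $A$ modulo $a$ and translate the structure of $\ker \overline A$ into information about the invariant factors via the SNF.  (If $a=0$ the claim reduces to $m = n - \rank A$, so assume $a \neq 0$.)  The key observation is that every integer eigenvector $v$ with $Av = av$ satisfies $\overline A\, \overline v = 0$ in $(\Z/a\Z)^n$, so geometric multiplicity produces kernel elements of $\overline A$.

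To exploit this, I would pick a $\Z$-basis $v_1, \ldots, v_m$ of the saturated lattice $L = \ker(A - aI) \cap \Z^n$, which has rank $m$ and is a direct summand of $\Z^n$.  The direct-summand property ensures $L \cap a\Z^n = aL$, so the images $\overline{v_1}, \ldots, \overline{v_m}$ generate a free $\Z/a\Z$-submodule of $(\Z/a\Z)^n$ of rank $m$, all sitting inside $\ker \overline A$.  Next, writing $PAQ = \diag(s_1, \ldots, s_n)$ in SNF and reducing mod $a$ (where $\overline P$, $\overline Q$ remain invertible), one finds $\ker \overline A \cong \bigoplus_{i} \Z/\gcd(a, s_i)\Z$ as a $\Z/a\Z$-module.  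The problem is thus reduced to the following: for $M = \bigoplus_i \Z/d_i\Z$ with $d_i \mid a$ and $d_1 \mid d_2 \mid \cdots \mid d_n$, the existence of a free rank-$m$ $\Z/a\Z$-submodule implies $\#\{i : d_i = a\} \geq m$.

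This combinatorial step is where I expect the main difficulty.  I would argue one prime at a time: for each prime $p \mid a$ with $v_p(a) = e$, the $p$-torsion of $p^{e-1} M$ is an $\F_p$-space of dimension $\#\{i : v_p(d_i) = e\}$, and any free rank-$m$ submodule $F \subseteq M$ contributes an $m$-dimensional $\F_p$-subspace to it (since $p^{e-1}F \cong (p^{e-1}\Z/a\Z)^m$ has $p$-torsion $(\Z/p)^m$).  Hence $\#\{i : v_p(d_i) = e\} \geq m$ for every $p \mid a$.  The divisibility chain forces each such index set to be a tail of $\{1, \ldots, n\}$, so the tails nest and their intersection — which is exactly $\{i : d_i = a\}$ — still has size at least $m$.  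Translating back via $d_i = \gcd(a, s_i)$ yields $\#\{i : a \mid s_i\} \geq m$, completing the proof.
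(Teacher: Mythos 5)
The paper gives no proof of this lemma; it is quoted from \cite{BH} as a known result, so there is nothing in the text to compare your argument against line by line. Your proof is correct and is a complete, self-contained version of the standard argument: the $a$-eigenspace meets $\Z^n$ in a pure sublattice $L$ of rank $m$, purity gives $L\cap a\Z^n=aL$, so the image of $L$ in $(\Z/a\Z)^n$ is a free $\Z/a\Z$-module of rank $m$ contained in $\ker\overline A$, while the SNF identifies $\ker\overline A$ with $\bigoplus_i\Z/\gcd(a,s_i)\Z$. Your prime-by-prime computation of the $p$-torsion of $p^{e-1}M$ is a valid way to carry out the final combinatorial step; alternatively one could invoke the standard fact that the invariant factors of a subgroup of a finite abelian group divide those of the ambient group when aligned from the largest, which shortens that step, but your direct argument buys self-containedness at no cost in rigor. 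One caveat worth recording: your conclusion counts all $n$ diagonal entries of the SNF, with a zero entry counted as divisible by $a$. That is the form in which the lemma is true. Under the paper's footnote convention that the invariant factors are only the $r$ nonzero entries, the statement can fail for singular $A$: for $A=\left(\begin{smallmatrix}0&1\\0&2\end{smallmatrix}\right)$ the eigenvalue $2$ has eigenvector $(1,2)^T$, yet the SNF is $\mathrm{diag}(1,0)$ and the only nonzero invariant factor is $1$. So the lemma should be read with the zero entries included, which is exactly what your argument delivers.
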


Let $\Gamma$ be a regular graph of degree $k$ on $v$ vertices.
Then $\Gamma$ is a {\it strongly regular graph} (SRG) if, for any pair of vertices,
the number of common neighbors depends only on
whether the vertices are adjacent or not. If the number of
common neighbors for pairs of adjacent vertices is  $\lambda$
and the number for pairs of nonadjacent vertices is $\mu$, then
we say that $\Gamma$ is an SRG with parameters $(v,k,\lambda,\mu)$.
We refer to \cite{BH} for properties and many examples of SRGs.
An SRG has of course the all-one vector $\allone$ as an eigenvector
(with eigenvalue $k$). The orthogonal complement of $\allone$
(in the $\Q$-inner product space with the vertices as orthonormal basis) 
is the direct sum of eigenspaces for two distinct integral eigenvalues $r$ and $s$,
called the restricted eigenvalues.

The following \cite[p.174]{BH} gives quite strong information about the $p$-elementary
divisors of an SRG for $p\nmid v$.

\begin{proposition}\label{SRGeldiv}  Let  $A$ be the adjacency matrix of an SRG
with parameters $(v,k,\lambda,\mu)$ and restricted eigenvalues $r$ and $s$.
Let  $p$ be a prime and assume that $p\nmid v$, $p^a||k$, $p^b||s$, $p^c||r$, with
$a\geq b+c$. Let $e_i$ denote the multiplicity of $p^i$ as an elementary divisor of $A$.
Then $e_i=0$ for $\min(b,c)<i<\max(b,c)$ and $b+c<i<a$
and $i>a$. Moreover, $e_{b+c-i}=e_i$ for $0\leq i<\min(b,c)$. 
\end{proposition}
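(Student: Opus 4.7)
The plan is to localize at $p$ and exploit the factorization of $A$ coming from the SRG identity. Since $p\nmid v$ and $A\allone=k\allone$, over $R=\Z_{(p)}$ the lattice splits $A$-stably as $V=R\allone\oplus W$ with $W=\allone^\perp$. On $R\allone$ the map $A$ is multiplication by $k=p^au$ (unit $u$), giving one elementary divisor $p^a$ and the contribution $e_a\ge 1$. All further work takes place on $W$, and it is harmless to assume $b\le c$; the task is to show that on $W$ the $p$-elementary divisors of $f=A|_W$ lie in $[0,b]\cup[c,b+c]$ with $e_{b+c-i}=e_i$ for $i<b$.

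On $W$ the SRG identity $(A-rI)(A-sI)=\mu J$ becomes $(f-rI)(f-sI)=0$, since $J$ annihilates $\allone^\perp$. I would introduce two auxiliary maps: $\alpha=f-rI$ and $g=(r+s)I-f$. The relation $f^2=(r+s)f-rsI$ gives $gf=rsI=p^{b+c}u_1I$ on $W$, so rationally $g=p^{b+c}u_1f^{-1}$. A direct SNF computation shows that whenever $gf$ is a unit times $p^{b+c}I$ on a free $R$-module, the elementary divisors of $f$ and $g$ are dual: $e_j(g)=e_{b+c-j}(f)$. This alone gives the vanishing $e_i(f)=0$ for $i>b+c$, since no elementary divisor of $f$ can exceed $p^{b+c}$.

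The key technical step is a lattice estimate for $\alpha$. I would decompose $W\otimes\Q=E_r\oplus E_s$ into $f$-eigenspaces and set $L_r=W\cap E_r$, $L_s=W\cap E_s$. The rational projector $\pi_s=(f-r)/(s-r)$ has denominator $r-s$ of $p$-valuation $b$, and $\alpha(w)=(s-r)\pi_s(w)$; unwinding the definitions produces the sandwich $p^bL_s\subseteq\alpha W\subseteq L_s$. Hence $L_s/\alpha W$ is killed by $p^b$, and since $L_s$ is saturated in $W$, this forces every nonzero elementary divisor of $\alpha$ to be at most $p^b$. Now $f=\alpha+p^cu_4I$ shows $f\equiv\alpha\pmod{p^c}$, so $f$ and $\alpha$ have identical SNF in all exponents below $c$; in particular $e_i(f)=e_i(\alpha)=0$ for $b<i<c$. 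Dually, $g=-\alpha+sI\equiv-\alpha\pmod{p^b}$ yields $e_i(g)=e_i(\alpha)=e_i(f)$ for $i<b$. Combining this with the duality $e_j(g)=e_{b+c-j}(f)$ gives the symmetry $e_{b+c-i}(f)=e_i(f)$ for $i<b$, and re-adding the $e_a=1$ from $R\allone$, using $a\ge b+c$, completes the statement on $V$.

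The hard part is the lattice sandwich $p^bL_s\subseteq\alpha W\subseteq L_s$, which encodes how denominators of the rational idempotents $\pi_r,\pi_s$ persist in integral images. In the case $b=c$ the vanishing range $(b,c)$ is empty and no bound on $\alpha$ is needed there; the symmetry still works because $v_p(r+s)\ge b$ always, and $g\equiv-f\pmod{p^b}$. Once the sandwich is in hand the rest of the proof is a formal assembly of the congruences $f\equiv\alpha\pmod{p^c}$, $g\equiv-\alpha\pmod{p^b}$, and $gf=p^{b+c}u_1I$, combined with the general SNF duality for matrices joined by a scalar product.
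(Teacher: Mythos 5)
The paper does not actually prove Proposition~\ref{SRGeldiv}; it is quoted from \cite[p.174]{BH}, so there is no internal proof to compare against. Your argument is essentially the standard one, and its main steps all check out. Since $p\nmid v$, the idempotent $\frac{1}{v}J$ is defined over $R=\Z_{(p)}$ and splits $R^v$ into $R\allone$ (contributing the single elementary divisor $p^a$) and $W=\allone^{\perp}$. On $W$ the relation $(f-rI)(f-sI)=0$ gives $gf=-rsI$ with $g=(r+s)I-f$, and your duality $e_j(g)=e_{b+c-j}(f)$ follows as claimed by writing $Q^{-1}gP^{-1}=-rs\,D^{-1}$, which also forces $e_i(f)=0$ for $i>b+c$. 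The sandwich $p^bL_s\subseteq\alpha W\subseteq L_s$ is correct when $b<c$: for $w\in L_s$ one has $\alpha w=(s-r)w$ and the exact power of $p$ dividing $s-r$ is $p^b$; since $L_s$ is saturated, the torsion of $W/\alpha W$ is $L_s/\alpha W$, killed by $p^b$, so the nonzero elementary divisors of $\alpha$ are at most $p^b$. The congruence principle you invoke is also sound: if two endomorphisms agree modulo $p^c$ then the modules $M_i$ of \S\ref{localsnf} coincide for $i\le c$, whence by (\ref{eldiv}) the multiplicities $e_i$ agree for $i<c$.

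The one place you should not wave your hands is the final sentence, ``re-adding the $e_a=1$ from $R\allone$ \dots completes the statement.'' The hypothesis permits $a=b+c$, and then the extra elementary divisor $p^a$ lands at position $b+c$; for $i=0$ (which belongs to the range $0\le i<\min(b,c)$ whenever $\min(b,c)\ge 1$) your own computation gives $e_{b+c}(A)=e_{b+c}(f)+1=e_0(f)+1=e_0(A)+1$, not $e_0(A)$. This is not a defect you can repair, because the literal symmetry claim fails there: for the SRG with parameters $(35,18,9,9)$ (the complement of the skew-lines graph of $PG(3,2)$) and $p=3$ one has $v=35$, $k=18$, $r=3$, $s=-3$, so $a=2$ and $b=c=1$; the restriction $f$ of $A$ to $\allone^{\perp}$ satisfies $f^2=9I$, forcing $e_0(f)=e_2(f)$ and hence $e_2(A)=e_0(A)+1$. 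What your argument actually proves is the symmetry $e_{b+c-i}=e_i$ for $0\le i<\min(b,c)$ \emph{after} setting aside the one elementary divisor $p^a$ contributed by the valency --- equivalently, the clause as quoted needs either $a>b+c$ or $i>0$. You should say this explicitly rather than assert that the reassembly completes the statement; apart from that boundary case, the proof is correct.
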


Of the two examples mentioned above, the noncollinearity graph of the Klein
quadric is always an SRG while the Kneser graphs never are.

\subsection{Abelian Cayley graphs and difference sets}
Suppose a (multiplicative) abelian group $G$ acts regularly on $X$, preserving 
an incidence relation $I\subseteq X\times X$.
Then by identifying $G$ with $X$, we have a translation-invariant relation
on $G$. Such relations are uniquely determined by the
subset $E=\{e\in G\mid (1, e)\in I$\}, since $(g,h)\in I$ if and only if 
$g^{-1}h\in E$. The relation can be viewed as adjacency in the Cayley graph of $G$
with respect to the connecting set $E$. Let $A$ be the adjacency matrix with respect to 
some fixed order.

Let $C=(\chi(g))$ denote the character table of $G$, with rows 
indexed by the set $G^\vee$ of irreducible complex characters of $G$ and the columns
indexed by the elements of $G$, in the same order as for $A$ and 
$\overline C=(\chi(g^{-1}))$.
 
Then, as first observed in \cite{McWM}, we have
\begin{equation}\label{McWM}
\frac{1}{\abs{G}}{\overline C}AC^t=\diag(\chi(E))_{\chi\in G^\vee}.
\end{equation}

The significance of this equation is twofold.
First, the orthogonality relations $\frac{1}{\abs{G}}\overline C C^t=I$ show that
the $\chi(E)$ are the eigenvalues of $A$.
Secondly, if $p$ is any prime that does not divide $\abs{G}$, we can read
the equation as an equivalence over the ring $R=\Z_p[\xi]$,
where $\Z_p$ is the ring of $p$-adic integers and $\xi$ is a primitive
$\abs{G}$-th root of unity. Since $p$ is unramified in $R$,
we see that the exact powers of $p$ dividing the $\chi(E)$ in $R$ are
precisely the $p$-elementary divisors of $A$.

Here, the general theory ends and the known methods for computing these valuations
depend very much on the prime and the relation in question. 

\begin{example}\label{hammingmax}
Let $S$ be a set of size $mn$ partitioned into $m$ subsets of
size $n$. Let $X$ be the set of {\it transversals}, that is, subsets of $S$ of size $m$
that contain one element from each part. Two transversals are incident if and
only if they are disjoint. 
Thus the incidence matrix $A$ is $n^m\times n^m$.
Let $Z_n$ be a cyclic group of order $n$ written multiplicatively.
We can identify $X$ with the elements of the group $G=(Z_n)^m$ and observe that the
regular action of $G$ on itself by multiplication preserves incidence.
The set of elements incident with $1_G$ is $E=\{(a_1,\ldots,a_m)\in G\mid a_i\neq 1
\, \forall i\}$.  Let $A$ be the incidence matrix, for some fixed ordering of $G$.
We will apply the equation (\ref{McWM}) first to compute the spectrum of $A$ 
and then to obtain the SNF.
First  we see from  (\ref{McWM}) that $A$ is similar to $\diag(\chi(E))_\chi$.
The irreducible characters of $G$ are of the form 
$\chi=(\lambda_1,\ldots,\lambda_m)$, where
$\lambda_i$ is an irreducible character of $Z_n$. Then starting from the fact that 
$$
\sum_{z\in Z_n\setminus\{1\}}\lambda_i(z)=\begin{cases} -1,\quad\text{if $\lambda_i$
is not principal,}\\
n-1,\quad\text{if $\lambda_i$ is principal},
\end{cases}
$$
it follows that $\chi(E)=(-1)^{m-r}(n-1)^r$, where $r$ is the number
of $i$ such that $\lambda_i$ is principal. Also, one sees that the multiplicity
of  $(-1)^{m-r}(n-1)^r$ as an eigenvalue is the number of characters $\chi$
that have exactly $r$ principal components, which is $\tbinom{m}{r}(n-1)^{m-r}$,
since once the $r$ principal components are fixed, there are $n-1$ choices for
nonprincipal characters in each of the remaining ${m-r}$ components.
Thus we know that the determinant of $A$ is, up to sign, a power of $(n-1)$ and
the SNF involves only primes $p$ dividing $n-1$. If $p$ is such a prime,
then in particular $p\nmid \abs G$ and we can view (\ref{McWM}) as expressing
equivalence of matrices over a suitable $p$-local ring.
We may conclude that in a suitable ordering of the characters, $\diag(\chi(E))_\chi$ is actually the SNF of $A$. 
The matrix $A$ in this example is the association matrix for the
maximal distance in the {\it Hamming association scheme} $H(m,n)$.
(For background on association schemes, see \cite{BrouwerHaemers}.)
The eigenvalues of all association matrices for $H(m,n)$ are known \cite{Delsarte},
but the SNFs of these matrices do not seem to be known. This example was studied in \cite{JMU}, where the  SNF was computed for small examples and conjectured in general.

\end{example}

Cayley Graphs can be derived from {\it difference sets}.
An {\it abelian difference set} is a subset $B$ in an abelian group 
$G$ such that for some natural number $\lambda$ each nontrivial element $g\in G$
has the form $g=x^{-1}y$ for precisely $\lambda$ pairs $(x,y)$ of elements in $B$.
If we take $G$ as the set of points and the translates
$gB$ as the set of blocks, the incidence structure obtained 
has the structure of a {\it symmetric design} (cf. \cite{Lander1}).
Since $gB=g'B$ only for $g=g'$, we can also identify the set of blocks 
with $G$, and then we have a $G$-invariant relation on $G$ of the kind
described above, hence a Cayley graph, with $E=B$.

\begin{example}\label{singerdiff}
Let $d\geq 3$ and let $q$ be a power of a prime $p$.
The Singer difference set is a difference set $L_0$ in $\F^*_{q^d}/\F^*_q$, 
consisting of those cosets of $\F^*_q$ whose elements $y$ satisfy $\Tr_{\F_{q^d}/\F_q}(y)=0$. Each character $\chi$ of $\F^*_{q^d}/\F^*_q$ is of the form 
$\omega^{d(q-1)}$, where $\omega$ is a generator of the character group of $\F^*_{q^d}$. 
The {\it Gauss sum} over $\F_{q^d}$ with respect to the multiplicative character
$\chi$ and the additive character $y\mapsto \xi^{\Tr_{q^d/p}(y)}$, where $\xi$
is a primitive $p$-th root of unity is defined as 
$$
g(\chi)=\sum_{y\in\F^*_{q^d}}\chi(y)\xi^{\Tr_{q^d/p}(y)}. 
$$
Evaluation of this Gauss sum (\cite{Yamamoto}; see also \cite[p.400]{Berndt-Evans-Williams}) yields
$$
g(\chi)=q\chi(L_0).
$$
The $\mathfrak p$-adic valuation of this Gauss sum is determined by a classical theorem
of Stickelberger. Computation of the SNF is then a matter
of determining the $\mathfrak p$ valuation of $\omega^d$ in terms of $d$
and counting the $d$'s for a given valuation. In this way,
a new proof of the  $p$-rank was given in \cite{EHKX}.  
R. Liebler also took this approach towards computing the SNF 
in unpublished work.

The incidence relation for Singer difference sets can also be
interpreted as the incidence of points and hyperplanes in $PG(n,q)$ , $n\geq 2$,
and so it is both a special case of Example~\ref{exsubspaces} and a generalization of
the point-line incidence of the projective planes $PG(2,q)$, for which
the SNF was computed in \cite{Lander2}.
This point-hyperplane incidence for general $n$ but prime fields only was
studied by Black and List, who determined the SNF in \cite{BL}. 
Their method was different from the
general Cayley graph approach outlined above; instead they viewed the 
incidence matrix as the so-called rational character table of an elementary
abelian group and made a reduction by  tensor factorization to the case of a cyclic group of prime order. For the case of arbitrary finite fields, the SNF
of the point-hyperplane incidence was first computed using 
the modular representation theory of $\GL(n+1,q)$. 

Let $q=p^t$. Let $d_\lambda$ be the coefficient 
of $z^{\lambda}$ in $(\sum_{j=0}^{p-1}z^j)^{n+1}$.
Explicitly,
\begin{equation}\label{dlambda}
d_{\lambda} = \sum_{k = 0}^{\lfloor \lambda/p \rfloor}(-1)^{k}\binom{n+1}{k}\binom{n+\lambda-kp}{n}.
\end{equation}

Define the matrix $M=(m_{i,j})$ $(1\le i, j\le n)$ with 
entries in $\Z[z]$
by $m_{i,j}=d_{pj-i}z^{pj-i}$ and let $a_r=a_{r,t}$ $(0\le r\le t(n-1))$ be the coefficient of
$z^{r(p-1)}$ in $\trace(M^t)$.

\begin{theorem}\label{singer}
The Smith group of the incidence matrix of points and hyperplanes of $PG(n,q)$ 
has cyclic factors of the following orders and multiplicities:
\begin{enumerate}
\item $\frac{(q^n-1)}{(q-1)}$ with multiplicity 1.
\item $p^{r}$ with multiplicity $a_{r}$, $0\le r\le (n-1)t$.
\end{enumerate}
\end{theorem}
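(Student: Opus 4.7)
The plan is to exploit the Singer cycle interpretation from Example~\ref{singerdiff}: both points and hyperplanes of $PG(n,q)$ are identified with $G=\F_{q^{n+1}}^*/\F_q^*$, a cyclic group of order $v=(q^{n+1}-1)/(q-1)$ acting regularly and preserving incidence, with connecting set the Singer difference set $L_0$. The incidence matrix $A$ is then a Cayley matrix, so equation~(\ref{McWM}) diagonalizes it over a suitable extension. Since $p\nmid v$, $p$ is unramified in $R=\Z_p[\xi]$ for $\xi$ a primitive $v$-th root of unity, and by (\ref{McWM}) the $p$-elementary divisors of $A$ coincide with the $\mathfrak p$-adic valuations of the eigenvalues $\chi(L_0)$ as $\chi$ ranges over characters of $G$. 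I would treat the prime-to-$p$ part and the $p$-primary part of the Smith group separately.

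For the prime-to-$p$ part, the trivial character contributes $\chi_0(L_0)=(q^n-1)/(q-1)$, which is coprime to $p$. For any nontrivial $\chi$, the identity $g(\chi)=q\chi(L_0)$ combined with $g(\chi)\overline{g(\chi)}=q^{n+1}$ yields $\chi(L_0)\overline{\chi(L_0)}=q^{n-1}$, a power of $p$, so each nontrivial $\chi(L_0)$ is an $\ell$-adic unit for every $\ell\neq p$. Computing $\det(AA^T)$ directly from $AA^T=q^{n-1}I+\tfrac{q^{n-1}-1}{q-1}J$ shows that the prime-to-$p$ part of $|\det A|$ is $(q^n-1)/(q-1)$. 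Moreover, $\gcd(q^{n+1}-1,q^n-1)=q-1$ together with the residues $n+1$ and $n$ modulo $q-1$ being coprime shows that $v$ and $(q^n-1)/(q-1)$ are coprime. Hence any $\ell\neq p$ contributing $\ell$-torsion to the Smith group satisfies $\ell\nmid v$, so (\ref{McWM}) applies and forces the $\ell$-part to be cyclic. By the Chinese remainder theorem the prime-to-$p$ part of the Smith group is a single cyclic group of order $(q^n-1)/(q-1)$, accounting for factor~(1).

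For the $p$-primary part, I would apply Stickelberger's theorem to compute $v_{\mathfrak P}(g(\chi))=s_p(a)$, the base-$p$ digit sum of the Stickelberger index $a$ of $\chi=\omega^{-a}$, where $\mathfrak P$ is a prime above $p$ in $\Z[\xi,\zeta_p]$. Since $g(\chi)=q\chi(L_0)$ lies in $\Z[\xi]$ and the ramification index of $\mathfrak P$ over $\mathfrak p$ is $p-1$, this gives $v_{\mathfrak p}(g(\chi))=s_p(a)/(p-1)$, so $v_{\mathfrak p}(\chi(L_0))=s_p(a)/(p-1)-t$. The characters of $G$ are the $\omega^{-d(q-1)}$ for $d\in\{0,1,\dots,v-1\}$, so the multiplicity of $p^r$ as an elementary divisor of $A$ equals the number of $d\in\{1,\dots,v-1\}$ for which $s_p(d(q-1))=(r+t)(p-1)$.

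The main obstacle is the combinatorial identification of this count with $a_r$. The coefficient $d_\lambda$ of $z^\lambda$ in $\left(\sum_{j=0}^{p-1}z^j\right)^{n+1}$ counts $(n+1)$-tuples of base-$p$ digits summing to $\lambda$, which model the possible base-$p$ structure of a single base-$q$ digit of $d(q-1)$. Writing $d$ in base $q$, the digits of $a=d(q-1)$ arise from a cyclic subtraction $a_k\equiv d_{k-1}-d_k\pmod q$ (indices modulo $n+1$) together with a borrow flow; multiplication by $q=p^t$ cyclically shifts the $n+1$ blocks of base-$p$ digits while preserving $s_p$, so the enumeration organizes into closed cycles of length dividing $t$. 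The matrix $M=(d_{pj-i}z^{pj-i})$ encodes the weight of a compatible transition between adjacent base-$q$ digits, and $\trace(M^t)$ sums these weights along all closed sequences of length $t$; extracting the coefficient of $z^{r(p-1)}$ then yields $a_r$. Rigorously verifying this bijection—reconciling the borrow structure, the Frobenius orbit action, and the precise definition of $M$—is the technical heart of the proof.
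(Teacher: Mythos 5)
Your route is the Cayley-graph/difference-set one that the surrounding text of Example~\ref{singerdiff} sketches: diagonalize $A$ via (\ref{McWM}) over $\Z_p[\xi]$, identify the eigenvalues with $q^{-1}g(\chi)$ by Yamamoto's evaluation, and read off $p$-adic valuations from Stickelberger. This is \emph{not} the proof the paper actually points to --- Theorem~\ref{singer} is cited as a special case of Theorem~\ref{main} from \cite{CSX}, proved by the modular representation theory of $\GL(n+1,q)$ (the filtration modules $M_i$, monomial bases and Teichm\"uller lifts, Gauss sums inside the $p$-adic group ring, and Wan's theorem); the Stickelberger route you take is the one the paper attributes to Liebler's unpublished work and to \cite{EHKX} for the $p$-rank. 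Your treatment of the $p'$-part is complete and correct: $\chi(L_0)\overline{\chi(L_0)}=q^{n-1}$ for nontrivial $\chi$, $\gcd\bigl((q^{n+1}-1)/(q-1),(q^n-1)/(q-1)\bigr)=1$ so (\ref{McWM}) is a valid $\ell$-local equivalence for every relevant $\ell\neq p$, and cyclicity follows. The reduction of the $p$-part to counting $d\in\{1,\dots,v-1\}$ with $s_p(d(q-1))=(r+t)(p-1)$ is also correct.

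The gap is the step you yourself flag: the identification of that count with $a_r$. Your sketch via base-$q$ digits of $d$ and a ``cyclic subtraction with borrows'' is the shakiest part and is not how the transfer matrix arises. The standard mechanism (this is exactly how $\HH$ is built in \cite{BS} and \cite{CSX}) is to attach to $a=d(q-1)$ the Frobenius orbit $a, pa, p^2a,\dots,p^{t-1}a \pmod{q^{n+1}-1}$, write each in base $q$ with $n+1$ digits, and set $s_i=(\text{base-$q$ digit sum of } p^ia)/(q-1)\in\{1,\dots,n\}$; the number of $a$ realizing a given tuple $(s_0,\dots,s_{t-1})$ is $\prod_i d_{\lambda_i}$ with $\lambda_i=ps_{i+1}-s_i$, and $s_p(a)=\sum_i\lambda_i=(p-1)\sum_i s_i$. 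This converts your count into $\sum_{\sum s_i=r+t}d(\mathbf{s})$, which is the coefficient of $z^{(r+t)(p-1)}$ in $\trace(M^t)$ --- note the shift by $t$ relative to the paper's literal definition of $a_r$ as the coefficient of $z^{r(p-1)}$ (as written, that coefficient vanishes for $r<t$, so the paper's normalization of $a_r$ must carry this shift, equivalently one may use the coefficient of $z^{(nt-r)(p-1)}$ via the palindrome symmetry $d_\lambda=d_{(p-1)(n+1)-\lambda}$, which is what matches Theorem~\ref{main}). Your proposal silently passes over this discrepancy, and without the digit-sum/orbit lemma the proof is not complete; with it, the argument closes.
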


It is worth mentioning some  representation-theoretic perspectives
of this result. The group $G=\GL(n+1,q)$ acts on $X=PG(n,q)$ and so
$\F_q^X$ is a module over the group algebra $\F_qG$.
The {\it socle} of a module $E$, denoted  $\soc(E)$, 
is the sum of all simple submodules or, equivalently, the maximal
semisimple submodule. The {\it radical}, $\rad(E)$,
is the intersection of all maximal submodules or, equivalently,
the smallest submodule by which the quotient
module is semisimple. The higher radicals and socles
are defined recursively in the usual way: $\soc^i(E)$
is the full preimage in $E$ of $\soc(E/\soc^{i-1}(E))$ and
$\rad^i(E)=\rad(\rad^{i-1}(E))$.
Let $Y$ denote the set of hyperplanes in $PG(n,q)$ and let
$R=\Z_p[\omega]$ be the extension of the $p$-adic
integers by a primitive $(q-1)$-th root of unity, so that $R/pR\cong\F_q$.
Consider the incidence map
$\eta:R^Y\to R^X$, and let the modules $M_i\subset R^Y $ and $N_i\subset R^X$
be defined as in \S\ref{localsnf}, with corresponding submodules $\overline M_i\subset \F_q^Y$ and $\overline N_i\subset \F_q^X$. 
Then it can be shown that $\overline M_i=\rad^i \F_q^Y$ and 
$\overline N_i=\soc^i \F_q^X$. The radical and socle series of $\F_q^X$ are described
in \cite{BS}.
According to Theorem~\ref{singer} the trace of the matrix $M^t$ 
is the generating function for the multiplicities of the $p$-elementary divisors.
The multiplicity of $p^r$ is the coefficient $a_r$ of $z^{r(p-1)}$ in  this polynomial,
which  is a sum of $t$-fold products $\prod_{i=1}^td_{j_i}$ where $\sum_{i=0}^tp^id_{j_i}=r(p-1)$. As explained in \cite{BS} these $t$-fold products are the dimensions of simple $\F_q\GL(n+1,q)$-modules, factorized as $t$-fold twisted tensor products in accordance with Steinberg's tensor product theorem (\cite[II/3.17]{Jantzen}).

Theorem~\ref{singer} is a special case of a more general result, to be treated 
in \S\ref{subspaces}, from \cite{CSX}, which solves the SNF problem for points versus subspaces of a fixed dimension in $PG(n,q)$. 
\end{example}

The cyclic difference sets studied in \cite{CX} also arise from
multiplicative groups of finite fields,  the sums $\chi(B)$ are evaluated using 
Jacobi sums. Here too, Stickelberger's Theorem applies because of the simple relation between Jacobi and Gauss sums.

Aside from adjacency matrices of graphs, there is also the vertex-edge incidence
matrix. A recent paper in this direction is \cite{Wong1}, in which
the SNF problem for the vertex-edge incidence matrices of a certain class of 
bipartite graphs is tackled. The approach makes use of Smith's 
characterization of the invariant factors in terms of the determinantal 
divisors $d_i$ mentioned in the Introduction.
One application of the results is a new calculation of zero-sum mod 2 bipartite
Ramsey numbers.

We now turn to a more detailed review of the current state
of knowlege about Examples~\ref{exsets} and \ref{exsubspaces}.
 
\subsection{Subsets of a set}\label{subsets}
Let $S$ be a finite set of size $n$ and let $X_k$ denote the 
set of subsets of $S$ of size $k$. 
Let $W_{t,k}$ be the inclusion matrix of $t$-subsets in $k$-subsets.
We think of $W_{t,k}$ as a map $\Z^{X_t}\to\Z^{X_k}$.

A diagonal form was found in \cite{Wilson}.
\begin{theorem}\label{Wilsonform}
$W_{t,k}$ has a diagonal form with diagonal entries $\tbinom{k-i}{t-i}$, each
with multiplicity $\tbinom{n}{i}-\tbinom{n}{i-1}$, for $0\leq i\leq t$.
\end{theorem}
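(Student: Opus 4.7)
The plan is to exhibit, via an explicit unimodular change of basis on each side, the claimed diagonal form. Two ingredients do the work: a composition identity, and a consistent way of extracting integral bases from a natural overcomplete family.

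Double-counting chains $T\subseteq I\subseteq K$ with $|T|=t$, $|I|=i$, $|K|=k$ gives the identity
\[
W_{i,k}\,W_{t,i}\;=\;\binom{k-t}{i-t}\,W_{t,k}\qquad(t\le i\le k).
\]
Swapping the roles of $t$ and $i$ and evaluating at a basis vector $\delta_I$ for an $i$-subset $I$ with $i\le t$ yields
\[
W_{t,k}\bigl(\nu_I^{(t)}\bigr)\;=\;\binom{k-i}{t-i}\,\nu_I^{(k)},
\]
where $\nu_I^{(r)}:=W_{i,r}(\delta_I)=\sum_{|K|=r,\,K\supseteq I}\delta_K\in\Z^{X_r}$. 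Thus $W_{t,k}$ already looks diagonal on the spanning family $\{\nu_I^{(t)}\}_{|I|\le t}$: the ``level-$i$'' generator is carried to exactly $\binom{k-i}{t-i}$ times its counterpart at level $i$ in $\Z^{X_k}$.

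It remains to distill this overcomplete family into genuine $\Z$-bases of $\Z^{X_t}$ and $\Z^{X_k}$ using precisely $\binom{n}{i}-\binom{n}{i-1}$ vectors at each level $i$. Over $\Q$ this is forced by representation theory of $S_n$: each $\Q^{X_r}$ decomposes multiplicity-freely as $\bigoplus_i V_i^{(r)}$ with $V_i^{(r)}\cong S^{(n-i,i)}$ of dimension $\binom{n}{i}-\binom{n}{i-1}$, and the $\nu_I^{(r)}$ at level $i$ span $\bigoplus_{j\le i}V_j^{(r)}$. Selecting, for each $i$, a family $\mathcal{F}_i$ of $\binom{n}{i}-\binom{n}{i-1}$ $i$-subsets of $S$---for instance those corresponding to standard Young tableaux of shape $(n-i,i)$---and using the \emph{same} indexing set on both sides of $W_{t,k}$ makes the scaling identity above automatically produce the predicted diagonal entries $\binom{k-i}{t-i}$ on blocks of size $\binom{n}{i}-\binom{n}{i-1}$.

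The main obstacle is integrality: one must verify that $\{\nu_I^{(r)}:I\in\bigcup_i\mathcal{F}_i\}$ is a $\Z$-basis of $\Z^{X_r}$, not merely a $\Q$-basis. I would attack this by induction on $r$ (equivalently, on the maximum level present), using Möbius inversion on the Boolean lattice. Concretely, the transition matrix from the natural basis $\{\delta_K:|K|=r\}$ to the candidate family can be organized into a block-triangular form whose diagonal blocks are the change-of-basis matrices between the $\nu_I^{(r)}$ at a single level and a set of $\delta_K$'s indexed by the same family; establishing unimodularity of each such block is the crux. Since the Möbius function of the Boolean lattice takes values in $\{\pm 1\}$, unimodularity reduces to a combinatorial counting argument of the type introduced by Wilson, which shows that the chosen $\mathcal{F}_i$ are precisely the data needed to invert these transitions over $\Z$. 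Once this is done, the diagonal form with entries $\binom{k-i}{t-i}$ and multiplicities $\binom{n}{i}-\binom{n}{i-1}$ follows directly from the scaling identity.
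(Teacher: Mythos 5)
Your overall strategy is sound, and it is essentially the route the paper itself gestures at rather than a new one: the composition identity you start from is the first of the ``fundamental identities'' displayed right after the theorem, and the existence of a single family $\bigcup_i\mathcal{F}_i$, with $\abs{\mathcal{F}_i}=\tbinom{n}{i}-\tbinom{n}{i-1}$ subsets at level $i$, whose associated vectors $\nu_I^{(r)}$ form a $\Z$-basis of $\Z^{X_r}$ simultaneously for the relevant $r$, is precisely Bier's refinement of Wilson's theorem quoted in the paper from \cite{Bier} (the canonical choice there is by Frankl rank, which matches your standard-tableau indexing). Granting such bases, your computation $W_{t,k}(\nu_I^{(t)})=\tbinom{k-i}{t-i}\nu_I^{(k)}$ does immediately produce the stated diagonal form. (One hypothesis left implicit both by you and by the statement: the multiplicities $\tbinom{n}{i}-\tbinom{n}{i-1}$ are nonnegative only for $i\le (n+1)/2$, so the result tacitly assumes $t\le\min(k,n-k)$; otherwise one first trades $W_{t,k}$ for $\overline W_{t,n-k}$.)

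The genuine gap is the one you flag yourself: integrality. That the selected $\nu_I^{(r)}$ form a $\Z$-basis, and not merely a $\Q$-basis, is the entire content of the theorem --- every other step of your argument is formal --- and your sketch does not establish it. The transition matrix from $\{\delta_K\}$ to the candidate family is not block-triangular in any evident ordering (each $\nu_I^{(r)}$ involves every $K\supseteq I$, cutting across all levels), and the fact that the M\"obius function of the Boolean lattice takes values $\pm1$ does not by itself control the relevant maximal minors; an arbitrary choice of $\mathcal{F}_i$ of the correct cardinalities gives a $\Q$-basis but in general not a $\Z$-basis, so the specific combinatorics of the chosen subsets must enter, and that is exactly the unproved ``crux''. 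Note also that Wilson's own argument in \cite{Wilson} sidesteps the construction of simultaneous bases: it uses the second pair of identities in the paper, the ones involving the disjointness matrices $\overline W_{i,k}$ (in particular $W_{t,k}=\sum_{i}(-1)^iW_{i,t}^T\overline W_{i,k}$ and $W_{i,j}\overline W_{j,t}=\tbinom{n-t-i}{j-i}\overline W_{i,t}$), to locate the row lattice of $W_{t,k}$ inside the filtration by the row lattices of the $W_{i,k}$; that is where the integral information actually comes from. To complete your proof you must either import Bier's basis theorem as a black box or supply the unimodularity argument you allude to, which is a substantial piece of work rather than a routine verification.
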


Indispensable ingredients in the proof of these results
are the following  fundamental identities, which are easily verified.
\begin{equation}
\begin{aligned}
W_{i,j}W_{j,t}=\binom{t-i}{j-i}W_{i,t},\qquad & W_{i,j}\overline W_{j,t}=\binom{n-t-i}{j-i}\overline W_{i,t}\\
W_{t,k}=\sum_{i=0}^t(-1)^iW_{i,t}^T\overline W_{i,k}, \qquad& 
\overline W_{t,k}=\sum_{i=0}^t(-1)^iW_{i,t}^TW_{i,k}.
\end{aligned}
\end{equation}
Here, $\overline W_{t,k}$ is the disjointness matrix of $t$-subsets and $k$-subsets.
The matrices $W_{t,k}$ are of course the same as the matrices $\overline W_{t,n-k}$ 
with the columns reordered, since  a $t$-subset is contained in a $k$-subset
if and only if it is disjoint from the complement. Thus, Wilson's formula
also solves the SNF problem for the disjointness relation.

The symmetric group $S_n$ acts on $S$ and $W_{t,k}$
is a $\Z S_n$-module map, so although this action is not used in
the original proof, it seems appropriate nevertheless
to mention some connections of Theorem~\ref{Wilsonform} with the representation theory of
$S_n$. The multiplicities are the dimensions of the simple 
$\Q S_n$-submodules of $\Q^{X_t}$ and, over arbitrary fields, of {\it Specht modules} 
corresponding to partitions with two parts \cite {JamesKerber} . 
The diagonal entries yield the $p$-rank of $W_{t,k}$
over $\F_p$, which is helpful in understanding the $\F_pS_n$-submodule structure of $\F_p^{X_k}$.  

In \cite{Bier}, it was shown that with respect to the {\it Frankl rank} \cite{Frankl}
of a subset, there is a canonical choice of $\tbinom{n}{i}-\tbinom{n}{i-1}$ rows 
of $W_{i,k}$, for $i\leq k$, such that the union is a basis of $\Z^{X_k}$
which is a right SNF basis for every $W_{t,k}$, $t\leq k$.

The general case of Example~\ref{exsets} is where $s$, $t$ and $k$ are fixed
and a $t$-subset is incident with a $k$-subset if and only if their
intersection has size $s$. 
In the case $t=k\leq n/2$, this defines the $(k-s)$-th association
relation in the {\it Johnson association scheme} $J(n,k)$. As for the Hamming schemes, the
eigenvalues of the association matrices for $J(n,k)$ have been calculated long ago
(by Yamamoto et. al. \cite{Yamamoto2} and independently by Ogasawara). However the SNF
problems remain open. A recent paper \cite{WilsonWong} gives a diagonal form for matrices
in the Bose-Mesner Algebra of $J(n,k)$ that satisfy a ``primitivity''
condition. However, this  condition is not satisfied by the association matrices themselves.

\begin{problem}
Solve the SNF problems for the association matrices of the Hamming and Johnson schemes.
The answer is known for the relation of maximal distance in both cases.
For the Hamming scheme we saw in Example~\ref{hammingmax}
that the diagonal entries of a diagonal form were equal, counting multiplicities, to the eigenvalues. This is also true of $J(n,k)$, where the maximal distance relation is the disjointness relation that defines the Kneser graphs $K(n,k)$.  
The eigenvalues of $K(n,k)$ are the same (up to sign) as the entries of the diagonal form of Theorem~\ref{Wilsonform}.  
\end{problem}

\subsection{Subspaces of a vector space}\label{subspaces}
We already considered the incidence of points and hyperplanes of $PG(n,q)$
in Example~\ref{singerdiff}.
Now, we shall look at some further instances of Example~\ref{exsubspaces}.
We shall assume that $V$ is a vector space of dimension $n+1$ over the field
$\F_q$, with $q=p^t$. For $0\leq d\leq n+1$ we denote by $\LL_d$ the set of $d$-dimensional subspaces of $V$ (``$d$-subspaces'' for short). By analogy with the subsets of a set,
we consider for $d\leq e$ the inclusion matrices $A_{d,e}$ of $d$-subspaces
in $e$-subspaces and the incidence matrices $\Abar_{d,e}$ for the relation of zero
intersection, whenever $d+e\leq n+1$.

The most general result obtained so far is for the matrices $A_{1,r}$,
where the SNF problem was solved in \cite{CSX}. As the two statements below
show, the Smith group is a product of a cyclic group of order coprime to $p$
(cyclic $p'$-group for short) and a large $p$-group, the determination of which is the main work. The statement of this and later results involves a certain partially ordered
set.  Let $\HH$ denote the set of $t$-tuples of integers $\mathbf{s}=(s_{0}, \dots , s_{t-1})$ that satisfy, for $0\le i\le t-1$,
\begin{enumerate}
\item $1 \le s_{i} \le n$,
\item $0 \le ps_{i + 1} - s_{i} \le (p-1)(n+1)$,
\end{enumerate}
with subscripts read modulo $t$.  First introduced in~\cite{Hamada},
 the set $\HH$ was later used in~\cite{BS} to describe the module structure of $\F_q^{\LL_1}$ under the action of $\GL(n+1,q)$.  
Let
\begin{equation}\label{Halpha}
\HH_{\alpha}(s) = \Bigl\{(s_{0}, \dots , s_{t-1}) \in \HH \, \Big\vert \, \sum_{i=0}^{t-1}\max\{0, s-s_{i}\} = \alpha\Bigr\}.
\end{equation}

To each tuple $\mathbf{s} \in \HH$ we associate a number $d(\mathbf{s})$ as follows.  For $\mathbf{s} = (s_0, \dots ,s_{t-1}) \in \HH$ define the integer tuple $\mathbf{\lambda} = (\lambda_{0}, \dots ,\lambda_{t-1})$ by 
\[
\lambda_{i} = ps_{i+1} - s_{i} \hbox{\quad (subscripts mod $t$)}.
\]
Finally, set $d(\mathbf{s}) = \prod_{i=0}^{t-1}d_{\lambda_{i}}$, where
$d_\lambda$ is  defined in (\ref{dlambda}).

\begin{theorem}\label{p'part} Let $v=\abs{\LL_1}$.
 The invariant factors of $A_{1,r}$ are all 
$p$-powers except for the $v^\mathrm{th}$ invariant,
which is a $p$-power times $(q^r-1)/(q-1)$.
\end{theorem}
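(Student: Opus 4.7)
The plan is to control the cokernel $C=\coker(A_{1,r})$ one prime $\ell\ne p$ at a time, showing that its $\ell$-primary part coincides with that of $\Z/\tfrac{q^r-1}{q-1}\Z$.

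First I record the divisibility $\tfrac{q^r-1}{q-1}\mid s_v$. Since each column of $A_{1,r}$ has exactly $\tfrac{q^r-1}{q-1}$ ones (the points of an $r$-subspace), the sum-of-coordinates map $\sigma:\Z^{\LL_1}\to\Z$ satisfies $\sigma\circ A_{1,r}=\tfrac{q^r-1}{q-1}\,\Sigma$ and therefore induces a surjection $\bar\sigma: C\twoheadrightarrow\Z/\tfrac{q^r-1}{q-1}\Z$. In parallel, a direct count gives the two identities $A_{1,r}A_{1,r}^T = q^{r-1}\binom{n-1}{r-1}_q I + \binom{n-1}{r-2}_q J$ and $A_{1,r}\mathbf 1 = \binom{n}{r-1}_q\mathbf 1$, which yield in $C$ the relations $\binom{n}{r-1}_q\bar{\mathbf 1}=0$ and $q^{r-1}\binom{n-1}{r-1}_q\bar e_P = -\binom{n-1}{r-2}_q\bar{\mathbf 1}$ for every point $P$. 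These already bound the primes that can appear in the $p'$-part of $C$ and reduce the problem to computing $\ell$-elementary divisors one prime at a time.

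For primes $\ell\nmid vp$, where $v=\tfrac{q^{n+1}-1}{q-1}$, I use the Singer cycle $G\cong\Z/v\Z$, of order coprime to $p$, acting regularly on $\LL_1$ and equivariantly on $\LL_r$. After passing to the unramified extension $R=\Z_\ell[\xi_v]$, the module $R\otimes\Z^{\LL_1}$ is multiplicity-free as an $RG$-module, so Proposition~\ref{multfree} applies with $Y=R\otimes\Z^{\LL_1}$, $\alpha=A_{1,r}$, $\beta=A_{1,r}^T$, giving a joint SNF basis $\{v_\chi\}_\chi$ indexed by characters of $G$. A short computation of $A_{1,r}^T v_\chi$ in coordinates identifies the $\ell$-local invariant factors of $A_{1,r}$ with $t_\chi = \gcd_{U\in\LL_r}\chi(E_U)$, where $E_U\subset G$ is the point set of $U$. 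The trivial character recovers $t_1=\tfrac{q^r-1}{q-1}$. For nontrivial $\chi$, identifying $G=\F_{q^{n+1}}^\times/\F_q^\times$ and lifting $\chi$ to $\tilde\chi$ on $\F_{q^{n+1}}^\times$ yields $\chi(E_U)=\tfrac{1}{q-1}\sum_{x\in U\setminus\{0\}}\tilde\chi(x)$; for subspaces $U$ arising as Frobenius--Galois twists of intermediate $\F_{q^d}$-subfields these subspace character sums reduce to Gauss sums over those subfields, and Stickelberger's theorem---applied as in Example~\ref{singerdiff}---controls their $\ell$-adic valuations and shows that $t_\chi$ is a unit at $\ell$ for every $\chi\ne 1$.

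The main obstacle is the primes $\ell\mid v$ with $\ell\ne p$, where $|G|$ is divisible by $\ell$ and Proposition~\ref{multfree} is unavailable. Here I would replace the character-theoretic argument with a modular-representation argument for $\GL(n+1,q)$: the image of $A_{1,r}\bmod\ell$ is a $\GL$-stable $\F_\ell$-subspace of $\F_\ell^{\LL_1}$ that either surjects onto $\F_\ell$ via $\sigma$ (when $\ell\nmid\tfrac{q^r-1}{q-1}$) or is contained in the sum-zero submodule (when $\ell\mid\tfrac{q^r-1}{q-1}$). Identifying the $\GL$-submodule structure of $\F_\ell^{\LL_1}$ in characteristic $\ne p$ then forces the image to coincide either with $\F_\ell^{\LL_1}$ or with its sum-zero submodule, giving exactly the $\ell$-part predicted by the theorem. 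Verifying this submodule structure uniformly across all such $\ell$---particularly in the delicate cases where $\ell$ also divides $|\GL(n+1,q)|$---is the technical heart of the argument.
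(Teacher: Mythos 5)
Your overall architecture—establish $\tfrac{q^r-1}{q-1}\mid s_v$ from column sums, then pin down the $\ell$-part of the cokernel one prime $\ell\neq p$ at a time using the Singer cycle where $\ell\nmid v$ and cross-characteristic representation theory where $\ell\mid v$—is sensible and close in spirit to the tools this survey advertises (equation (\ref{McWM}), Proposition~\ref{multfree}, and Chandler's use of James's results); note the survey itself states the theorem without proof, citing \cite{CSX}. But there are two genuine gaps. First, for $\ell\nmid vp$ the decisive step is to show that for every nontrivial character $\chi$ of the Singer group some $r$-subspace $U$ has $\chi(E_U)$ an $\ell$-adic unit, and your mechanism does not deliver this. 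Stickelberger's theorem controls the $\mathfrak p$-adic valuation of Gauss sums only at primes above $p$; for $\ell\neq p$ the relevant fact is $g(\chi)\overline{g(\chi)}=q^{n+1}$, and even that identifies $\chi(E_U)$ with a Gauss sum only for hyperplanes, via $g(\chi)=q\chi(L_0)$. For general $r$ the subspaces you propose—multiplicative translates of intermediate subfields—exist only when $r\mid n+1$, and for them $E_U$ is a coset of the subgroup $\F_{q^r}^\times/\F_q^\times$, so $\chi(E_U)$ equals either $0$ or $\chi(a)\tfrac{q^r-1}{q-1}$; neither value certifies that $t_\chi=\gcd_U\chi(E_U)$ is a unit. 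The claim that $t_\chi$ is an $\ell$-unit for all $\chi\neq 1$ is exactly the claim that the $\ell$-rank of $A_{1,r}$ is at least $v-1$, and nothing in the proposal proves it. (A second-moment computation, $\sum_U\chi(E_U)\overline{\chi(E_U)}=v\,q^{r-1}\binom{n-1}{r-1}_q$, settles it when $\ell\nmid\binom{n-1}{r-1}_q$, but primes dividing $\binom{n-1}{r-1}_q$ and not $\tfrac{q^r-1}{q-1}$ remain untouched, and your $AA^T$ identity gives no useful bound there either.)

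Second, the case $\ell\mid v$—which genuinely occurs together with $\ell\mid\tfrac{q^r-1}{q-1}$, e.g.\ $\ell=3$, $q=2$, $n+1=4$, $r=2$—is explicitly deferred: you reduce it to ``identifying the $\GL$-submodule structure of $\F_\ell^{\LL_1}$,'' which is precisely the nontrivial input (Mortimer's theorem on permutation modules of $2$-transitive groups, or James's theory of unipotent Specht modules for $\GL(n+1,q)$ in non-defining characteristic, as used in Chandler's thesis cited here). Once that input is invoked it handles all $\ell\neq p$ uniformly and renders the Singer-cycle step superfluous; as written, the proof is simply not complete at its hardest point. Finally, even after the $\ell$-rank is shown to be at least $v-1$, you still owe the upper bound $v_\ell(s_v)\le v_\ell\bigl(\tfrac{q^r-1}{q-1}\bigr)$: the column-sum argument gives only the divisibility $\tfrac{q^r-1}{q-1}\mid s_v$, and $\det\bigl(A_{1,r}A_{1,r}^T\bigr)$ does not bound the $\ell$-part of $\prod_i s_i$ when $\ell$ divides $\binom{n-1}{r-1}_q$. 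You need either an explicit $v\times v$ minor, or a complementary incidence identity, to close this off.
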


\begin{theorem}\label{main}
The $p$-adic invariant factors of the incidence matrix $A_{1,r}$ between
$\mathcal{L}_1$ and $\mathcal{L}_r$ are $p^\alpha$,
$0\leq \alpha\leq (r-1)t$, with multiplicity
$$e_\alpha=\sum_{\mathbf{s}\in\HH_\alpha}
d(\mathbf{s})+ \delta(0,\alpha)$$ 
where 
\begin{equation}
\delta(0, \alpha)=
\left\{
\begin{array}{ll}
1, & \mbox{if} \; \alpha=0,\\
0, & \mbox{otherwise}. \end{array}
\right.
\end{equation}
\end{theorem}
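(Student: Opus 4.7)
The plan is to work $p$-locally and reduce the SNF computation to the modular representation theory of $G=\GL(n+1,q)$ via the submodule filtrations introduced in \S\ref{localsnf}. Set $R=\Z_p[\omega]$ with $\omega$ a primitive $(q-1)$-th root of unity, so $R/pR\cong\F_q$. View the incidence map as an $RG$-module homomorphism $\eta:R^{\mathcal{L}_1}\to R^{\mathcal{L}_r}$, and form the chain $M=M_0\supseteq M_1\supseteq\cdots$ of $RG$-submodules of $R^{\mathcal{L}_1}$. By Theorem~\ref{p'part}, all elementary divisors except one are $p$-powers, and the remaining one is a $p$-power times $(q^r-1)/(q-1)$; so the $p$-local computation captures all the structure. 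By equation (\ref{eldiv}), the multiplicity of $p^\alpha$ as an elementary divisor equals $\dim_{\F_q}(\overline{M}_\alpha/\overline{M}_{\alpha+1})$, so the whole problem is to compute the graded dimensions of this filtration on $\F_q^{\mathcal{L}_1}$.

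The next step, following the template sketched in the paper for the point-hyperplane case, is to identify the $\overline{M}_\alpha$ with explicit terms of an $\F_qG$-submodule filtration of $\F_q^{\mathcal{L}_1}$. The submodule lattice of $\F_q^{\mathcal{L}_1}$ under $\GL(n+1,q)$ is completely described in \cite{BS}, with composition factors indexed by the tuples $\mathbf{s}\in\HH$. The goal is to show that, for the filtration induced by $A_{1,r}$, the simple module labeled by $\mathbf{s}$ sits in the layer $\overline{M}_\alpha/\overline{M}_{\alpha+1}$ precisely when $\sum_{i=0}^{t-1}\max\{0,r-s_i\}=\alpha$. In other words, the filtration step at which a composition factor is annihilated (modulo higher powers of $p$) by the incidence map into $\F_q^{\mathcal{L}_r}$ should be read off from the tuple via the very formula defining $\HH_\alpha(r)$ in (\ref{Halpha}). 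This yields the set-theoretic partition $\HH=\bigsqcup_\alpha\HH_\alpha(r)$ matching the filtration.

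Granted this identification, the dimension of the simple module attached to $\mathbf{s}$ is supplied by Steinberg's tensor product theorem (\cite[II/3.17]{Jantzen}): writing $\lambda_i=ps_{i+1}-s_i$, the module decomposes as a twisted tensor product of simple $\F_p\GL(n+1,\F_p)$-modules of dimensions $d_{\lambda_i}$ (the polynomials of degree $\le n(p-1)$ accounted for by (\ref{dlambda})). Multiplying these gives $d(\mathbf{s})=\prod_i d_{\lambda_i}$, and summing over $\mathbf{s}\in\HH_\alpha(r)$ produces the principal term $\sum_{\mathbf{s}\in\HH_\alpha}d(\mathbf{s})$. The correction $\delta(0,\alpha)$ accounts for the one invariant factor identified in Theorem~\ref{p'part}: that factor has $p$-part equal to $1$ but comes from the $p'$-eigenspace spanned by $\allone$, which is not among the composition factors indexed by $\HH$ in the filtration computation; it therefore must be added by hand in the $\alpha=0$ slot.

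The principal obstacle is step two: pinning down, layer by layer, the exact match between the $\overline{M}_\alpha$-filtration (defined analytically in terms of $\pi$-divisibility of images) and the combinatorially labeled submodule filtration of $\F_q^{\mathcal{L}_1}$ coming from \cite{BS}. For the extreme case $r=n$ (hyperplanes) this is Theorem~\ref{singer}, where the socle/radical series of $\F_q^{\mathcal{L}_1}$ is exactly the $\overline{N}_i$/$\overline{M}_i$ filtration and the generating function is $\trace(M^t)$. For general $r$ the bijection between $\HH$ and $\{$composition factors of $\F_q^{\mathcal{L}_1}\}$ is fixed, but one must determine the $p$-adic depth to which each factor is carried by $A_{1,r}$. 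The natural strategy is to use Frobenius twists to reduce to the prime-field case and to exploit the $q$-binomial identities relating $A_{1,r}$ to compositions $A_{1,n}\cdot$(something), applying Proposition~\ref{multfree} or a direct spectral argument wherever a multiplicity-free component permits it; the combinatorial payoff, once the bookkeeping is done, is precisely the count $\sum_{\mathbf{s}\in\HH_\alpha(r)}d(\mathbf{s})$.
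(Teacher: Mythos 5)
Your reduction is set up correctly and matches the architecture of the actual argument in \cite{CSX} as sketched in this survey: localize at $p$, use the filtration $M_0\supseteq M_1\supseteq\cdots$ of \S\ref{localsnf} together with (\ref{eldiv}) to turn the problem into computing $\dim(\overline M_\alpha/\overline M_{\alpha+1})$, bring in the $\F_q\GL(n+1,q)$-submodule structure of $\F_q^{\LL_1}$ from \cite{BS} with composition factors labelled by $\HH$ and dimensions $d(\mathbf{s})$ supplied by Steinberg's tensor product theorem, and account for the span of $\allone$ by the term $\delta(0,\alpha)$. All of that is sound.

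However, the step you yourself flag as ``the principal obstacle'' is the entire content of the theorem, and the tools you propose for it do not close the gap. Proposition~\ref{multfree} cannot be invoked with $G=\GL(n+1,q)$: it requires an \emph{abelian} group of order \emph{prime to} $p$, and it concerns simultaneous SNF bases for a composite of two maps, not the depth of a single map. The relevant abelian group here is the Singer cycle, and exploiting it leads not to a soft ``spectral argument'' but to explicit character-sum computations: the proof takes the monomial basis of $\F_q^{\LL_1}$, lifts it by Teichm\"uller representatives to a left SNF basis over $R=\Z_p[\omega]$, and computes the $\pi$-adic valuations of the images under $A_{1,r}$ via Jacobi/Gauss sums and Stickelberger's theorem. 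Even granting all of that, one only obtains a one-sided estimate (each lifted monomial of type $\mathbf{s}$ lies in $M_\alpha$ for the predicted $\alpha$), i.e., lower bounds of the form $\sum_{\beta\ge\alpha}e_\beta\ge\sum_{\mathbf{s}}d(\mathbf{s})$; since $A_{1,r}$ is not square, there is no determinant identity to force equality, and your sketch contains no mechanism at all for the matching upper bounds. In \cite{CSX} these come from a separate ingredient, Wan's $p$-adic estimates for character sums \cite{Wan}, which the survey explicitly singles out as the key to ``the correct upper bounds for the $p$-elementary divisors.'' Until both halves of this two-sided estimate are supplied, the identification of $\overline M_\alpha/\overline M_{\alpha+1}$ with the sum of the factors labelled by $\HH_\alpha(r)$ remains an assertion rather than a proof.
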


Several themes we have discussed are reflected in the proof of Theorem~\ref{main}. 
The modules $M_i$ of \S\ref{localsnf} and 
their relation to the $\F_q\GL(n+1,q)$-submodule
structure of $\F_q^{\LL_1}$ play an important role. The space
has a basis of monomials and a left SNF basis is constructed by
taking Teichm\"uller lifts of these monomials to a suitable
$p$-adic ring $R$. Gauss sums and Stickelberger's Theorem then
appear, but in a rather different way from equation (\ref{McWM}), 
in computations  in  the $p$-adic group ring. 
A key ingredient is a result of Wan \cite{Wan}, from which
one obtains the correct upper bounds for the $p$-elementary divisors.

In the case of $A_{1,r}$, the $p'$-part of the Smith group is cyclic.
In his Ph.D. thesis \cite{Chandler}  Chandler, using results of James \cite{James}, 
 has given a diagonal form over the $\ell$-adic
integers for $\ell\neq p$ for all of the $A_{d,e}$. The result bears a
remarkable resemblance to Wilson's diagonal form for subsets, with binomial
coefficients replaced by $q$-binomial coefficients.

\begin{theorem} Let $s\leq r$ and  $s+r\leq n+1$. Let $\ell$ be any prime
not dividing $q$ and let $\Z_\ell$ denote the $\ell$-adic integers.
Then over $\Z_\ell$ the matrix $A_{r,s}$ has a diagonal form whose
diagonal entries are
$\tbinom{r-i}{s-i}_q$ with multiplicity $\tbinom{n+1}{i}_q-\tbinom{n+1}{i-1}_q$.
\end{theorem}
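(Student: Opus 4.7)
The plan is to follow Wilson's strategy for Theorem~\ref{Wilsonform} in the $q$-analog setting, using James' construction of $q$-Specht modules to produce the required integral direct sum decomposition of the permutation lattices. Note that since $s\le r$, the matrix in question is the inclusion matrix $A_{s,r}$ (interpreting the statement accordingly).

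First, I would establish the $q$-analog of the fundamental multiplicative identity
\begin{equation*}
A_{i,j}A_{j,k} = \binom{k-i}{j-i}_q A_{i,k}, \qquad 0\le i\le j\le k,
\end{equation*}
which follows from the observation that a fixed $i$-subspace inside a fixed $k$-subspace is contained in exactly $\binom{k-i}{j-i}_q$ intermediate $j$-subspaces. Combined with a $q$-analog M\"obius inversion expressing $A_{s,r}$ as a signed sum of products $A_{i,s}^T\,\Abar_{i,r}$, with the exact coefficients supplied by the $q$-Vandermonde identity, this reduces the SNF calculation to controlling a single canonical decomposition of the permutation lattice $\Z_\ell^{\LL_k}$.

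The heart of the argument is this decomposition. Since $\ell\nmid q$, James' theory of the $q$-Schur algebra and the unipotent representations of $\GL(n+1,q)$ yields a splitting
\begin{equation*}
\Z_\ell^{\LL_k} = \bigoplus_{i=0}^{k} V_{i,k}
\end{equation*}
of free $\Z_\ell$-modules, where $V_{i,k}$ realizes the $q$-Specht module $S^{(n+1-i,i)}_q$ and has $\Z_\ell$-rank $\binom{n+1}{i}_q-\binom{n+1}{i-1}_q$. Over $\Q_\ell$ this is the isotypic decomposition of the multiplicity-free permutation representation, since the Hecke algebra $\HH_q(S_{n+1})$ is commutative and semisimple over $\Q_\ell$; the content of James' integral construction is that this decomposition descends to a direct summand decomposition over $\Z_\ell$.

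With the decomposition in hand, the inclusion map $A_{s,r}\colon \Z_\ell^{\LL_s}\to\Z_\ell^{\LL_r}$ is a $\Z_\ell\GL(n+1,q)$-homomorphism, so by Schur's lemma each restriction $V_{i,s}\to V_{i,r}$ is multiplication by a scalar under the identification of both summands as the same absolutely simple module. Evaluating this scalar on a single vector, using the multiplicative identity together with the $q$-Vandermonde inversion, yields $\binom{r-i}{s-i}_q$ for $i\le s$ and zero otherwise (the summands $V_{i,k}$ appear only for $i\le k$). Choosing any $\Z_\ell$-basis of each $V_{i,s}$ and $V_{i,r}$ then assembles the claimed diagonal form. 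The principal difficulty I expect is the integral direct sum decomposition: while everything is routine over $\Q_\ell$, one must verify at primes $\ell$ dividing $|\GL(n+1,q)|$ that the isotypic components genuinely split off as $\Z_\ell$-direct summands, and this is precisely what the $q$-analog of James' polytabloid construction provides. Once this is available, the rest of the argument is essentially formal.
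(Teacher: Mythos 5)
Note first that the survey does not actually prove this theorem: it is quoted from Chandler's thesis \cite{Chandler}, whose proof combines James's results \cite{James} on the $\ell$-modular structure of the permutation modules $\F_\ell^{\LL_k}$ (in particular, rank formulas for the inclusion maps over fields of characteristic $\ell\nmid q$) with lattice arguments in the spirit of Wilson's proof of Theorem~\ref{Wilsonform}. Your preliminary steps, the composition identity $A_{i,j}A_{j,k}=\tbinom{k-i}{j-i}_qA_{i,k}$ and a $q$-analogue of the inclusion--exclusion identities, are sound and do appear in any proof along these lines.

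The step on which your whole argument rests, however, is false. You assert that for every $\ell\nmid q$ the lattice $\Z_\ell^{\LL_k}$ splits as a direct sum $\bigoplus_{i=0}^{k}V_{i,k}$ of $\Z_\ell\GL(n+1,q)$-sublattices realizing the distinct unipotent constituents. Already the rank-one piece rules this out: a rank-one $\Z_\ell G$-direct summand of $\Z_\ell^{\LL_k}$ would have to be $\Z_\ell\allone$ (it is a pure sublattice of the fixed points, which are exactly $\Z_\ell\allone$), and the corresponding $G$-equivariant projection onto it is a scalar multiple of the augmentation map, whose value on $\allone$ is $\abs{\LL_k}=\tbinom{n+1}{k}_q$; the requirement that the projection restrict to the identity on the summand forces $\ell\nmid\tbinom{n+1}{k}_q$. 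So for any $\ell$ dividing $\tbinom{n+1}{k}_q$ the decomposition does not exist --- for instance $\ell=3$, $q=2$, $n+1=4$, $k=s=1$, $r=2$, a case where the theorem's conclusion is non-trivial at $\ell$ since $\tbinom{2}{1}_2=3$ occurs as an elementary divisor of $A_{1,2}$. What James's polytabloid construction actually supplies is a filtration of the permutation lattice with Specht-type subquotients, not an isotypic direct sum decomposition; the splitting you invoke is available only over $\Q_\ell$. (Two smaller points: the commutative semisimple endomorphism algebra here is $\End_{\GL(n+1,q)}(\Q_\ell^{\LL_k})$, the Bose--Mesner algebra of the Grassmann scheme, not the Iwahori--Hecke algebra $\HH_q(S_{n+1})$, which is noncommutative; and even where integral summands do exist, Schur's lemma over $\Z_\ell$ only gives that $\Hom_G(V_{i,s},V_{i,r})$ has rank one --- its generator need not be an isomorphism of lattices, so ``multiplication by $\tbinom{r-i}{s-i}_q$'' does not follow.) The correct substitute for your decomposition is a basis of $\Z_\ell^{\LL_r}$ adapted to the chain of sublattices spanned by the rows of the various $A_{i,r}$ --- the $q$-analogue of the Bier/Wilson bases mentioned after Theorem~\ref{Wilsonform} --- which are not spans of $G$-submodules, with the multiplicities then pinned down by James's rank formulas over $\F_\ell$.
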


Note that whereas, in the case of subsets, $\overline W_{t,k}$ is essentially the same
as $W_{t, n-k}$ using set complementation, there is no simple relation
between $A_{d,e}$ with any $\Abar_{r,s}$, except in the case $d=1$,
where $A_{1,e}$ and $\Abar_{1,e}$ are complementary.  
In the case where $r+s=n+1$, the zero-intersection relation
encoded in $\Abar_{r,s}$ is an example of an {\it oppositeness}
relation in a spherical Tits building. In general, we know from \cite{Brouwer} that
for such relations all invariant factors are powers of the natural
characteristic $p$. The first nontrivial example is
when $\dim V=4$ and we consider the relation of zero intersection of $2$-dimensional
subspaces. Geometrically, we may think of skew lines in projective space. 
The SNF was determined in \cite{BDS}. 
Let $A=\Abar_{2,2}$.
\begin{theorem}
\label{thmBDS1}
Let $e_{i} = e_{i}(A)$ denote the multiplicity of $p^i$ as an elementary divisor of $A$.
\begin{enumerate}
\item \label{item:A1} $e_{i} = e_{3t-i}$ for $0 \le i < t$.
\item \label{item:A2} $e_{i} = 0$ for $t < i < 2t$, $3t < i < 4t$, and $i > 4t$.
\item \label{item:A3} $\sum_{i=0}^{t}e_{i} = q^4 + q^2$.
\item \label{item:A4} $\sum_{i=2t}^{3t}e_{i} = q^3 + q^2 + q$.
\item \label{item:A5} $e_{4t} = 1$.
\end{enumerate}
\end{theorem}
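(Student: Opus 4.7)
The plan is to combine the strongly regular graph structure of the skew-lines graph with its spectrum via Proposition~\ref{SRGeldiv}, to close part~(5) with a rank-one mod-$p$ reduction, and to deduce (3) and (4) by an elementary count.

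By Brouwer's theorem, every invariant factor of the oppositeness incidence $A=\Abar_{2,2}$ is a $p$-power, so the $e_i$ account for all invariant factors. The first task is to determine the spectrum of $A$. Setting $B=A_{1,2}$ (the point-line inclusion matrix), counting common points of two lines gives $B^{T}B = qI + J - A$, while $BB^{T} = (q^{2}+q)I + J$ is visibly invertible with eigenvalues $q^{3}+2q^{2}+2q+1$ and $q^{2}+q$. Transferring the nonzero eigenvalues to $B^{T}B$ and substituting produces the spectrum of $A$: $q^{4}$ with multiplicity $1$ (eigenvector $\allone$), $q$ with multiplicity $q^{4}+q^{2}$ (on $\ker(B^{T}B)$), and $-q^{2}$ with multiplicity $q^{3}+q^{2}+q$. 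Since $v=(q^{2}+1)(q^{2}+q+1)\equiv 1\pmod p$, Proposition~\ref{SRGeldiv} applies with $a=4t$, $c=t$, $b=2t$ and the required inequality $a\ge b+c$, yielding (1) and (2) immediately.

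For (5), the row-sum relation $A\allone=q^{4}\allone$ implies that $\mathbf{v}\mapsto\allone^{T}\mathbf{v}$ factors through the Smith group to give a surjection onto $\Z/q^{4}\Z$, so $s_v\ge q^{4}$; combined with (2) this forces $s_v=q^{4}$. Consequently $q^{4}A^{-1}$ is an integer matrix, and its $\F_p$-rank equals $e_{4t}$, because in an SNF diagonalization $q^{4}A^{-1}$ is equivalent to $\diag(q^{4}/s_i)$, and $q^{4}/s_i$ reduces to zero mod~$p$ unless $s_i=q^{4}$. Writing $q^{4}A^{-1}=\tfrac{1}{v}J+q^{3}E_{1}-q^{2}E_{2}$ via the spectral projections, the SRG identity $(A-qI)(A+q^{2}I)=q^{3}(q-1)J$ (forced by the spectral data) provides the rewrite $A^{2}=q^{3}(q-1)J+q(1-q)A+q^{3}I$. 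Substituting this into $(A-q^{4}I)(A+q^{2}I)$ and $(A-q^{4}I)(A-qI)$, one checks that both $q^{3}E_{1}$ and $q^{2}E_{2}$ lie in $q\cdot M_{v}(\Z_{p})\subseteq p\cdot M_{v}(\Z_{p})$. Hence $q^{4}A^{-1}\equiv\tfrac{1}{v}J\pmod p$, a rank-one matrix over $\F_p$, so $e_{4t}=1$.

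Parts (3) and (4) then follow from two linear equations. Set $S_{1}=\sum_{i=0}^{t}e_i$ and $S_{2}=\sum_{i=2t}^{3t}e_i$. By (2) and (5) the only nonzero $e_i$'s are those counted in $S_1$, $S_2$, and $e_{4t}=1$, so $\sum_i e_i=v$ gives $S_{1}+S_{2}=v-1$. The determinant $|\det A|=q^{4+(q^{4}+q^{2})+2(q^{3}+q^{2}+q)}$ yields $\sum_i i\,e_i=t(q^{4}+2q^{3}+3q^{2}+2q+4)$, which after using the symmetry (1) to pair low-$i$ and high-$i$ contributions collapses to $tS_{1}+2tS_{2}+4t$. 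Solving the system yields $S_{2}=q^{3}+q^{2}+q$ and $S_{1}=q^{4}+q^{2}$.

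The main obstacle is (5): showing that both correction terms $q^{3}E_{1}$ and $q^{2}E_{2}$ become divisible by $p$ as integer matrices. The naive expression for $q^{2}E_{2}$ carries a spurious factor of $1/q$, and only the SRG polynomial identity $(A-qI)(A+q^{2}I)=q^{3}(q-1)J$, which encodes a quadratic equation satisfied by $A$ modulo $\Q\allone$, produces the cancellation allowing $q^{2}E_{2}$ to land in $p\cdot M_{v}(\Z_{p})$.
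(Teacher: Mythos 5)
Your proof is correct, and for parts (1) and (2) it is exactly the route the paper indicates: the skew-lines graph is strongly regular with spectrum $q^4$, $q$ (multiplicity $q^4+q^2$) and $-q^2$ (multiplicity $q^3+q^2+q$), you verify $p\nmid v$, and Proposition~\ref{SRGeldiv} applies with $a=4t$, $b=2t$, $c=t$. For parts (3)--(5) the survey gives no details (deferring to \cite{BDS}), and your completion runs on somewhat different machinery than the toolkit the survey sets up: instead of feeding the eigenvalue multiplicities of $A$ and of $q^4A^{-1}$ into Lemma~\ref{eiginv} to get complementary one-sided bounds that are then squeezed against $\sum_i e_i=v$ and the determinant, you compute $e_{4t}$ exactly as the $\F_p$-rank of the integer matrix $q^4A^{-1}=\frac{1}{v}J+q^3E_1-q^2E_2$. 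I checked the two divisibilities this needs: $q^3E_1$ is a $p$-adic unit multiple of $q(A-q^4I)(A+q^2I)$, hence divisible by $p$; and $(A-q^4I)(A-qI)=q^3(q-1)J-q^2(1+q^2)A+q^3(1+q^2)I$ is divisible by $q^2$, which absorbs the denominator $q^3(1+q^2)(1+q)$ of $E_2$ up to a unit and one factor of $q$, exactly as you claim. So $q^4A^{-1}\equiv\frac{1}{v}J$ modulo $p$ and $e_{4t}=1$. The bookkeeping for (3) and (4) is also right: the symmetry of part (1) really does collapse $\sum_i ie_i$ to $tS_1+2tS_2+4t$, since the unpaired indices $t$ and $2t$ contribute $te_t+2te_{2t}$ and $S_2=e_{2t}+\sum_{i=0}^{t-1}e_i$. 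What your route buys is a self-contained exact determination of $e_{4t}$ (and hence of everything) without the eigenvalue--invariant-factor lemma; what the lemma-based squeeze buys is less matrix computation. Both are valid.
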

Thus we get all the elementary divisor multiplicities once we know $t$ of the numbers $e_{0}, \dots , e_{t}$ (or the numbers $e_{2t}, \dots ,e_{3t}$).  The next theorem describes these.  To state the theorem, we need some notation.

Set
\[
[3]^{t} = \{(s_0, \dots ,s_{t-1}) \, | \, s_{i} \in \{1,2,3\} \hbox{ for all $i$}\}
\]
and
\[
\mathcal{H}(i) = \bigl\{(s_0, \dots ,s_{t-1}) \in [3]^{t} \, \big\vert \, \#\{j|s_{j}=2\}=i\bigr\}.
\]
In other words, $\mathcal{H}(i)$ consists of the tuples in $[3]^{t}$ with exactly $i$ twos.  To each tuple $\mathbf{s} \in [3]^{t}$ we associate a number $d(\mathbf{s})$ as follows.  For $\mathbf{s} = (s_0, \dots ,s_{t-1}) \in [3]^{t}$ define the integer tuple $\mathbf{\lambda} = (\lambda_{0}, \dots ,\lambda_{t-1})$ by 
\[
\lambda_{i} = ps_{i+1} - s_{i},
\]
with the subscripts read mod $t$. Since $n+1=4$, the integer $d_{k}$ defined 
previously in (\ref{dlambda}) is the coefficient of $x^{k}$ in the expansion of $(1 + x + \cdots + x^{p-1})^{4}$.  Also recall that $d(\mathbf{s}) = \prod_{i=0}^{t-1}d_{\lambda_{i}}$.
\begin{theorem}
\label{thmBDS2}
Let $e_{i} = e_{i}(A)$ denote the multiplicity of $p^i$ as an elementary divisor of $A$.  Then, for $0 \le i \le t$,
\[
e_{2t+i} = \sum_{\mathbf{s} \in \mathcal{H}(i)}d(\mathbf{s}).
\]
\end{theorem}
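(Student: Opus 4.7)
My plan is to mirror the strategy sketched for Theorem~\ref{main} (the $A_{1,r}$ case from \cite{CSX}), adapted to the self-map $\eta:R^{\LL_2}\to R^{\LL_2}$ determined by $A=\Abar_{2,2}$, where $R=\Z_p[\omega]$ with $\omega$ a primitive $(q-1)$-th root of unity and $R/pR\cong \F_q$. Since $A$ encodes the oppositeness relation on lines in $PG(3,q)$, Brouwer's theorem already forces all invariant factors to be $p$-powers, so the task is purely one of computing the $p$-elementary divisor multiplicities. Theorem~\ref{thmBDS1} reduces the problem to computing $e_{2t+i}$ for $0\le i\le t$, and by (\ref{eldiv}) this equals $\dim_{\F_q}\bigl(\overline M_{2t+i}/\overline M_{2t+i+1}\bigr)$, where $\overline M_j$ is the reduction mod $p$ of $M_j=\{m\in R^{\LL_2}\mid \eta(m)\in p^j R^{\LL_2}\}$.

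First I would exploit the $GL(4,q)$-action: the $\overline M_j$ are $\F_qGL(4,q)$-submodules of $\F_q^{\LL_2}$, so each quotient $\overline M_{2t+i}/\overline M_{2t+i+1}$ is semisimple-by-structure in the sense that its composition factors can be enumerated using Steinberg's tensor product theorem. The relevant simple modules for $SL_4(\F_q)$ are indexed by $t$-tuples $\mathbf s=(s_0,\dots,s_{t-1})\in[3]^t$, where the three restricted weights $s=1,2,3$ correspond to the natural representation, $\bigwedge^2$ of the natural, and the dual natural, with dimensions matching $d_1,d_2,d_3$ in the expansion $(1+x+\cdots+x^{p-1})^4$. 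Twisting by Frobenius as in Steinberg's theorem produces a simple of dimension $\prod_i d_{ps_{i+1}-s_i}=d(\mathbf s)$. Thus the expected formula $e_{2t+i}=\sum_{\mathbf s\in\HH(i)}d(\mathbf s)$ asserts that the composition factors appearing in the layer $\overline M_{2t+i}/\overline M_{2t+i+1}$ are precisely those simples whose Steinberg tuple has exactly $i$ entries equal to $2$, each with multiplicity one.

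To establish this, I would construct explicit left SNF basis vectors for $\eta$ as Teichm\"uller-type lifts, following the template used for $A_{1,r}$. Concretely, I would identify $\F_q^{\LL_2}$ with a space of Pl\"ucker coordinates (or equivalently, functions on the Klein quadric in $PG(5,q)$), pick a monomial basis adapted to the Borel subgroup, lift monomials to $R$ via Teichm\"uller representatives, and compute the $p$-adic valuation of $\eta$ applied to each lift. The computation reduces to Gauss sum evaluations in the $p$-adic group ring of an appropriate abelian subgroup, and Stickelberger's theorem gives the valuation of each Gauss sum in terms of base-$p$ digit sums of the weight data. The grouping $\lambda_i=ps_{i+1}-s_i$ will emerge naturally from how the Frobenius twist interacts with these digit sums, and an upper bound argument of Wan~\cite{Wan} type (analogous to the one invoked in the proof of Theorem~\ref{main}) will force the Teichm\"uller lifts to be genuine SNF basis vectors rather than merely lower-bound candidates.

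The hard part will be the upper bound matching: the monomial count using $\HH(i)$ gives a lower bound $e_{2t+i}\ge\sum_{\mathbf s\in\HH(i)}d(\mathbf s)$ directly from the Stickelberger calculation, but one must show no extra contributions arise. Here I would use Theorem~\ref{thmBDS1}\eqref{item:A4}, which pins down $\sum_{i=0}^{t}e_{2t+i}=q^3+q^2+q=\abs{\LL_1}$, combined with the identity
\begin{equation}
\sum_{\mathbf s\in[3]^t}d(\mathbf s)=\sum_{i=0}^{t}\sum_{\mathbf s\in\HH(i)}d(\mathbf s),
\end{equation}
whose right-hand side should be shown to equal $q^3+q^2+q$ by a generating-function manipulation with the polynomial $\trace(M^t)$ from the point-hyperplane analogue. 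Once the two sides agree term by term, the lower bounds must all be tight, completing the proof. The principal technical obstacle is therefore the bookkeeping of digit carries in the Stickelberger step — specifically, verifying that tuples with exactly $i$ twos, and no others, land in the layer $\overline M_{2t+i}/\overline M_{2t+i+1}$; this is where the self-dual structure of $\Abar_{2,2}$ (as opposed to the asymmetric $A_{1,r}$) introduces genuinely new combinatorics.
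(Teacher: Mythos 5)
Your proposal diverges from the paper's actual strategy in a way that leaves a genuine gap. The paper does \emph{not} attack $\Abar_{2,2}$ directly with the Teichm\"uller-lift/Gauss-sum/Stickelberger machinery of \cite{CSX}. Instead, the key step is to show that the elementary divisors $p^{2t},\dots,p^{3t}$ of $\Abar_{2,2}$ coincide with those of the \emph{composite} map $\Abar_{2,1}\Abar_{1,2}$, which factors through $R^{\LL_1}$. The elementary divisors of $\Abar_{2,1}$ and $\Abar_{1,2}$ are already known from \cite{CSX}, and the Singer cycle acts multiplicity-freely on $K^{\LL_1}$, so Proposition~\ref{multfree} applies and the diagonal forms simply multiply. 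That reduction is the idea your proposal is missing, and it is precisely what explains why the answer is expressed in terms of the tuples in $[3]^t$ at all: these index the composition factors of the \emph{point} module $\F_q^{\LL_1}$ (of total dimension $q^3+q^2+q=\abs{\LL_1}-1$, not $\abs{\LL_1}$ as you write), and they enter only because the middle of the Smith normal form is being computed through $R^{\LL_1}$.

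Your direct approach founders exactly where you place the ``principal technical obstacle.'' The filtration modules $\overline M_{2t+i}$ in your setup live inside $\F_q^{\LL_2}$, a permutation module of dimension $(q^2+1)(q^2+q+1)$ whose composition factors include many simples \emph{not} indexed by $[3]^t$ and which is not multiplicity-free; its full $\GL(4,q)$-submodule lattice is not known in the explicit form that the argument of \cite{CSX} requires for $\F_q^{\LL_1}$. So the assertion that each layer $\overline M_{2t+i}/\overline M_{2t+i+1}$ is built, with multiplicity one, from the simples with exactly $i$ twos is not something you can read off from Steinberg's tensor product theorem -- it is essentially equivalent to the theorem being proved, and in the paper it is \emph{delivered} by the reduction to $\Abar_{2,1}\Abar_{1,2}$ rather than assumed. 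Likewise, the monomial basis, Teichm\"uller lifts, and Wan-type upper bounds of \cite{CSX} are constructions on functions on $\F_q^{n+1}$; transporting them to functions on the Klein quadric is a substantial open computation, not bookkeeping. Your closing counting argument (total $=q^3+q^2+q$ from Theorem~\ref{thmBDS1}\eqref{item:A4}, so lower bounds must be tight) is sound in form, but only once the lower bounds $e_{2t+i}\ge\sum_{\mathbf{s}\in\mathcal H(i)}d(\mathbf{s})$ have actually been produced, and your sketch gives no viable route to them.
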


\begin{remark}
When $p=2$, notice that $d(\mathbf{s})=0$ for any tuple $\mathbf{s}$ containing an adjacent $1$ and $3$ (coordinates read circularly).  Thus the sum in Theorem~\ref{thmBDS2} is significantly easier to compute in this case. 
\end{remark}

The proofs involve a combination
of the methods already mentioned. The first observation is that
the skewness relation defines a strongly regular graph which has integral eigenvalues 
which are powers of $p$ up to signs. Thus, we may apply Proposition~\ref{SRGeldiv},
which leads to Theorem~\ref{thmBDS1}.
The proof of Theorem~\ref{thmBDS2} lies somewhat deeper.
The missing elementary divisors can be shown to be the same
as for the composite $\Abar_{2,1}\Abar_{1,2}$.
The elementary divisors of $\Abar_{2,1}$ and $\Abar_{1,2}$ are known from \cite{CSX}.
Then to calculate the SNF of $\Abar_{2,1}\Abar_{1,2}$ use must be made of  the 
multiplicity-free action of the Singer cycle on $k^{\LL_1}$, through Proposition~\ref{multfree}.  
 
In fact the following more general theorem on composite incidence maps
for $V$ of arbitrary dimension is proved in \cite{BDS}, for arbitrary $r$ and $s$.
Let $\HH_{\beta}(r)$ be as defined in (\ref{Halpha}) and 
$$
\begin{aligned}
\leftsub{\beta}{\HH}(r) &= \{(n+1-s_{0}, \dots , n+1-s_{t-1}) \, | \, (s_{0}, \dots , s_{t-1}) \in \HH_{\beta}(r)\} \\
&= \Bigl\{(s_{0}, \dots , s_{t-1}) \in \HH \, \Big\vert \, \sum_{i=0}^{t-1} \max\{0, s_{i}-(n+1-r)\} = \beta\}\Bigr\}.
\end{aligned}
$$

\begin{theorem}\label{gencompthem}
Let $e_{i}=e_{i}(\Abar_{r,1}\Abar_{1,s})$ denote the multiplicity of $p^{i}$ as a $p$-adic elementary divisor of $\Abar_{r,1}\Abar_{1,s}$.%
%
%
\begin{enumerate}
\item $e_{t(r+s)} = 1$.
\item For $i \neq t(r+s)$,
\[
e_{i} = \sum_{\mathbf{s} \in \Gamma(i)} d(\mathbf{s}),
\]
where
\[
\Gamma(i) = \bigcup_{\substack{\alpha + \beta = i \\ 0 \le \alpha \le t(s-1) \\ 0 \le \beta \le t(r-1)}}  \leftsub{\beta}{\HH}(r) \cap \HH_{\alpha}(s).
\]
\end{enumerate}
Summation over an empty set is interpreted to result in $0$.
\end{theorem}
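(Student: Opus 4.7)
The plan is to apply Proposition~\ref{multfree} to reduce the problem to the already-understood elementary divisor structure of $\Abar_{r,1}$ and $\Abar_{1,s}$ separately. Let $\Sigma\subset\GL(n+1,q)$ be a Singer cycle. Its image modulo scalars has order $\abs{\LL_1}$ and acts regularly on $\LL_1$, so $K\otimes_R R^{\LL_1}$ is an inflation of the regular representation of a cyclic group and hence multiplicity-free as a $K\Sigma$-module; moreover $\gcd(\abs{\Sigma},p)=1$. Both $\Abar_{r,1}:R^{\LL_r}\to R^{\LL_1}$ and $\Abar_{1,s}:R^{\LL_1}\to R^{\LL_s}$ are $R\Sigma$-module homomorphisms since $\GL(n+1,q)$ preserves zero intersection. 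Proposition~\ref{multfree} therefore yields a basis $\{v_\chi\}$ of $R^{\LL_1}$, indexed by characters $\chi$ of $\Sigma$, that is simultaneously a right SNF basis for $\Abar_{r,1}$ and a left SNF basis for $\Abar_{1,s}$. As noted in the discussion preceding Proposition~\ref{multfree}, the diagonal forms then multiply, and the elementary divisor of the composite associated to $v_\chi$ is $p^{\alpha(\chi)+\beta(\chi)}$, where $p^{\alpha(\chi)}$ and $p^{\beta(\chi)}$ are the diagonal entries for $\Abar_{1,s}$ and $\Abar_{r,1}$ respectively.

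The next step is to identify $\alpha(\chi)$ and $\beta(\chi)$. Write $\Abar_{1,s}=J-A_{1,s}$, where $J$ is the rank-one all-ones matrix supported, as a $\Sigma$-equivariant map, on the trivial character component $R\allone_1$. Hence on every non-trivial character line the maps $\Abar_{1,s}$ and $-A_{1,s}$ agree, so they share the same $M_\alpha$-filtration off the trivial character. Theorem~\ref{main} applied to $A_{1,s}$ then says that for $\chi\ne 1$ the characters realising $\alpha(\chi)=\alpha$ are partitioned into blocks indexed by tuples $\mathbf{s}\in\HH_\alpha(s)$ with block sizes $d(\mathbf{s})$. An analogous argument works for $\Abar_{r,1}$: after transposing and applying the polarity $W\mapsto W^\perp$, which sends $r$-subspaces to $(n{+}1{-}r)$-subspaces, zero intersection becomes inclusion, and Theorem~\ref{main} applies to this dual setting. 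The substitution $s_i\mapsto n{+}1{-}s_i$ is exactly what turns $\HH_\beta(r)$ into $\leftsub{\beta}{\HH}(r)$, and the weights $d(\mathbf{s})$ are invariant under this substitution, so for $\chi\ne 1$ the characters with $\beta(\chi)=\beta$ are indexed by $\leftsub{\beta}{\HH}(r)$ with multiplicities $d(\mathbf{s})$.

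The assembly is now straightforward. For $\chi\ne 1$, the multiplicity of $p^i$ in $\Abar_{r,1}\Abar_{1,s}$ is the number of characters with $\alpha(\chi)+\beta(\chi)=i$, which equals $\sum_{\mathbf{s}\in\Gamma(i)}d(\mathbf{s})$. The trivial character contributes separately and independently: $\Abar_{r,1}(\allone_r)$ is a scalar multiple of $\allone_1$ by the number of 1-subspaces disjoint from a fixed $r$-subspace, which has $p$-adic valuation $tr$, and $\Abar_{1,s}(\allone_1)$ is a scalar multiple of $\allone_s$ by the number of $s$-subspaces disjoint from a fixed 1-subspace, of $p$-adic valuation $ts$. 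Hence the trivial character contributes a single elementary divisor $p^{t(r+s)}$ to the composite. Since the maximum of $\alpha+\beta$ over $\Gamma(i)$ is $t(r-1)+t(s-1)=t(r+s)-2t<t(r+s)$, this contribution is disjoint from the sum, yielding item~(1).

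The main obstacle is the second paragraph above: matching the character-indexed basis $\{v_\chi\}$ with the tuples $\mathbf{s}$ and the weights $d(\mathbf{s})$ that appear in Theorem~\ref{main}. This is the deep part of \cite{CSX}, relying on Gauss sum evaluations and Stickelberger's theorem to compute the $p$-adic valuations of character values, together with the $\F_q\GL(n+1,q)$-socle/radical structure of $\F_q^{\LL_1}$ from \cite{BS} to identify the character subspaces with simple composition factors parametrised by $\HH$. Once that correspondence is imported, the chain Proposition~\ref{multfree} $\Rightarrow$ multiplicativity of diagonal forms $\Rightarrow$ the formula for $\Gamma(i)$ is direct; the extra ``$+1$'' contribution appears for the same reason as the $\delta(0,\alpha)$ in Theorem~\ref{main}, namely because the trivial character does not fit into the generic pattern.
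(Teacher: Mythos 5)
Your proposal follows essentially the same route the paper indicates for this theorem (which is only sketched here and attributed to \cite{BDS}): the multiplicity-free action of the Singer cycle on $R^{\LL_1}$ feeds into Proposition~\ref{multfree}, the diagonal forms of $\Abar_{r,1}$ and $\Abar_{1,s}$ then multiply over a common SNF basis of $R^{\LL_1}$, the character-level valuation data is imported from \cite{CSX}, and the trivial character accounts for the isolated invariant $p^{t(r+s)}$. One minor imprecision: under the polarity, zero intersection of a point with an $r$-space corresponds to \emph{non}-inclusion of the dual $(n+1-r)$-space in the dual hyperplane, not to inclusion; this does not affect your argument, since the substitution $s_i\mapsto n+1-s_i$ relating the right SNF filtration of $\Abar_{r,1}$ to the left SNF filtration of its transpose is exactly what the definition of $\leftsub{\beta}{\HH}(r)$ encodes, and $d(\mathbf{s})$ is invariant under it as you note.
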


\begin{problem} Opposite Subspaces Problem. Let $r+s=n+1$ and let $V$
be an $(n+1)$-dimensional vector space over $\F_q$.  Solve the SNF problem
for the incidence between the set of $r$-dimensional subspaces and $s$-dimensional subspaces, where incidence is defined as zero intersection.  
\end{problem}

\subsection{Spaces with forms}
Every question about incidence of subspaces in a vector space
prompts analogous questions about vector spaces with bilinear,
quadratic or hermitian forms. In these cases, instead of all subspaces
one considers a distinguished class, such as totally singular
subspaces and their perp spaces with respect to the form. 
In the case of symplectic forms, Lataille \cite{Lataille}
has solved the SNF problem for symplectic vector spaces over 
a prime field $\F_p$, as well as computing the $p'$- factor of the
Smith module for the general case over any finite field.
In the case of a $6$-dimensional vector space with a quadratic form
of maximal index, the points of the quadric are the lines of $PG(3,q)$
under the Klein correspondence, so the result of \cite{BDS} discussed
above solves the SNF problem for the relation of (non)collinearity.
If we have a $3$-dimensional vector space over $\F_{q^2}$
with a nonsingular Hermitian form, then we may consider the incidence
of points and lines of the Hermitian unital. The SNF problem for this case was
solved in \cite{Hiss}, using results in modular representation theory.
In many other cases, there are partial answers, such as the computation
of the $p$-rank, which equals the multiplicity of 1 as a $p$-elementary divisor.
Very little else is known, so it seems reasonable to start in low dimensions.
Examples of incidence structures based on low-dimensional vector spaces with forms include the {\it generalized quadrangles}. (See \cite{PayneThas}.) These include the 
classical point line geometries of singular points and totally singular lines 
in symplectic spaces of (vector) dimension 4, orthogonal spaces of dimension 5 and 6 and
in Hermitian spaces of dimensions 4 and 5. One can pose the SNF problem for 
the point-line incidence or for the collinearity relation.

\begin{problem} Solve the SNF problem, with respect to one of the
incidence relations, for a family of generalized quadrangles. 
There are no cases for which this problem has yet been solved.
\end{problem}

\subsection{A word about $p$-ranks}
For many of the incidence relations we have considered, the $p$-rank,
i.e. the rank of the incidence matrix considered over $\F_p$, has been found,
which is the same as computing the multiplicity of $1$ as a $p$-elementary
divisor. In the case of subsets, the problem of determining
the $p$-ranks of the $i$-association matrices of the Johnson scheme $J(n,k)$
for all $p$ would appear to be practically as hard as the full SNF problem.
In the case of subspaces the $p$-ranks of $A_{d,e}$ are still unknown, but
the $p$-ranks of $\overline A_{d,e}$ are given by \cite{Sin2}. 
The $p$-ranks of some of the generalized quadrangles are known when $p$
is the defining characteristic, and all of the cross-characteristic ranks are known,
(from \cite{Sin-Tiep} and the references cited there). Also, many of the point-hyperplane $p$-ranks (in the defining characteristic) can be found in \cite{ArslanSin}. These are examples of oppositeness relations, or complements of such. 
For oppositeness relations in the building of a finite group of Lie type of characteristic
$p$ it is known  (\cite{Sin3}) that the $p$-ranks  are dimensions of 
irreducible $p$-modular representations of the group. 

The general topic of $p$-ranks has been studied for a very wide
variety of incidence matrices, especially for $p=2$. One reason for this is that
an incidence matrix can be used  as a parity-check or generator matrix of a 
binary code, whose dimension is then given by the $2$-rank. 
The literature is too extensive to summarize here. In the case of designs, many references, examples, applications and open questions are described in \cite{Xiang2}. Some recent papers on $2$-ranks of incidence structures of certain points and lines in $PG(2,q)$ defined by a conic are \cite{SWX}, \cite{Wu1} and \cite{Wu2}.

\section*{Acknowledgements}
I would like to thank Qing Xiang and Josh Ducey for helpful
discussions during the preparation of this article. 
\bibliographystyle{amsplain}
\def\cprime{$'$}
\providecommand{\bysame}{\leavevmode\hbox to3em{\hrulefill}\thinspace}
\providecommand{\MR}{\relax\ifhmode\unskip\space\fi MR }
\providecommand{\MRhref}[2]{%
  \href{http://www.ams.org/mathscinet-getitem?mr=#1}{#2}
}
\providecommand{\href}[2]{#2}

\end{document}